\documentclass[pdflatex,sn-mathphys-num]{sn-jnl}
\usepackage{graphicx}%
\usepackage{multirow}%
\usepackage{amsmath,amssymb,amsfonts}%
\usepackage{amsthm}%
\usepackage{mathrsfs}%
\usepackage[title]{appendix}%
\usepackage{xcolor}%
\usepackage{textcomp}%
\usepackage{manyfoot}%
\usepackage{booktabs}%
\usepackage{algorithm}%
\usepackage{algorithmicx}%
\usepackage{algpseudocode}%
\usepackage{listings}%
\theoremstyle{thmstyleone}
\newtheorem{theorem}{Theorem}[section]
\newtheorem{proposition}[theorem]{Proposition}
\newtheorem{lemma}[theorem]{Lemma}
\newtheorem{corollary}[theorem]{Corollary}

\theoremstyle{thmstyletwo}

\theoremstyle{thmstylethree}
\newtheorem{definition}[theorem]{Definition}

\begin{document}

\title[Isometries of Unit Spheres of SFS-Spaces and a Geometric Approach to Tingley's Problem]{Isometries of Unit Spheres of SFS-Spaces and a Geometric Approach to Tingley's Problem}

%%=============================================================%%
%% GivenName	-> \fnm{Joergen W.}
%% Particle	-> \spfx{van der} -> surname prefix
%% FamilyName	-> \sur{Ploeg}
%% Suffix	->  S\mathcal{F}x{IV}
%% \author*[1,2]{\fnm{Joergen W.} \spfx{van der} \sur{Ploeg} 
%%  \sfx{IV}}\email{iauthor@gmail.com}
%%=============================================================%%

\author[1,2]{\fnm{Karimbergen} \sur{Kudaybergenov}}
\email{kudaybergenovkk@gmail.com}

\author[3]{\fnm{Mukhtar} \sur{Ibragimov}}
\email{m.ibragimov1909@gmail.com}

\author*[3,4]{\fnm{Allabay} \sur{Arziev}}
\email{allabayarziev@gmail.com}

\affil[1]{%
\orgdiv{Institute for Advances Study in Mathematics},
\orgname{Harbin Institute of Technologies},
\orgaddress{
\city{Harbin},
\postcode{150001},
\country{China}}}

\affil[2]{%
\orgname{Suzhou Research Institute of Harbin Institute of Technology},
\orgaddress{
\city{Suzhou},
\postcode{215104},
\country{China}}}

\affil[3]{%
\orgname{V.I. Romanovskiy Institute of Mathematics},
\orgaddress{
\street{Uzbekistan Academy of Sciences},
\city{Tashkent},
\postcode{100174},
\country{Uzbekistan}}}

\affil[4]{%
\orgdiv{Math Department},
\orgname{Karakalpak State University},
\orgaddress{
\city{Nukus},
\postcode{230100},
\country{Uzbekistan}}}

%%==================================%%
%% Sample for unstructured abstract %%
%%==================================%%

\abstract{
We investigate surjective isometries between the unit spheres of facially symmetric spaces and their interaction with the underlying facial structure. It is shown that such isometries preserve the essential geometric relations associated with symmetric faces, orthogonality, and geometric tripotents. In particular, a surjective sphere isometry induces a natural correspondence between the geometric tripotents of the underlying spaces and preserves the principal structural features determined by the Peirce decomposition. These results provide a geometric framework for studying linear extension problems for sphere isometries.

As an application, we establish a positive solution of Tingley’s problem for finite-dimensional real neutral strongly facially symmetric spaces satisfying the joint Peirce property. We also obtain an extension result for a wider class of real neutral strongly facially symmetric spaces admitting an appropriate approximation by finite-dimensional Peirce subspaces. In these settings, every surjective isometry between unit spheres extends uniquely to a surjective real-linear isometry of the ambient spaces. The results demonstrate that the facial and tripotent structures play a decisive role in the rigidity of spherical isometries.
}

\keywords{Tingley’s problem, surjective isometry,  facially symmetric space,  geometric tripotent, symmetric face, Peirce decomposition, orthogonality.}

%%\pacs[JEL Classification]{D8, H51}

\pacs[MSC Classification]{Primary 46B20; Secondary 47L20}

\maketitle

\section{Introduction}
Tingley's problem asks whether every surjective isometry between the unit
spheres of two Banach spaces extends to a surjective real-linear isometry
between the spaces. One approach to this problem is based on studying the
action of a spherical isometry on the faces of the unit ball. The
preservation of maximal convex subsets of the sphere and other geometric
invariants \cite{Tanaka2014,Campos2021} plays an essential role in this
approach, whose effectiveness has been demonstrated for spaces of trace
class operators \cite{Fernandez2017} and preduals of von Neumann algebras
\cite{Mori2018}.

The theory of facially symmetric spaces, developed by Y.~Friedman and
B.~Russo \cite{FR1986,FR1989Affine,FR1989Operator,FR1992,FR1993}, provides
a framework for studying such questions by means of symmetric faces,
geometric tripotents, and Peirce decompositions. The aim of this paper is
to study invariants of surjective isometries between the unit spheres of
these spaces and to apply them to particular cases of Tingley's problem.
We first establish the preservation of orthogonality and of the
intersections of orthogonal complements with the sphere. For neutral
strongly facially symmetric spaces, this yields equality of the ranks of
spaces with isometric unit spheres.

The main part of the study concerns the preservation of symmetric faces.
We prove that every such face of the unit ball of a facially symmetric
space can be represented as the intersection of two faces corresponding
to maximal geometric tripotents
(Proposition~\ref{prop:intersection}). This allows us to establish that a
surjective isometry between the unit spheres of Banach facially symmetric
spaces maps every symmetric face onto a symmetric face
(Theorem~\ref{thm:faces}). Consequently, it induces a unique bijection
between the geometric tripotents that preserves order in both directions
(Theorem~\ref{thm:tripotents}). We then study the preservation of opposite
faces, orthogonality, sums of orthogonal geometric tripotents, and
orthocomplementation. In particular, we obtain oddness of the isometry at
norm-exposed points and, under property $\mathrm{PE}$ (see
Definition~\ref{def:PEFE}), at all extreme points of the unit ball.

The preservation of the unit spheres of Peirce-zero subspaces is
essential for the extension problem. The finite-dimensional case is
treated using the classification of finite-dimensional real neutral
strongly facially symmetric spaces with property $\mathrm{JP}$ (see
Definition~\ref{def:JP}) as $\ell_1$-sums of Hilbert spaces
\cite{Kudaybergenov2020}, together with a result on isometries between the
spheres of $\ell_1$-sums of strictly convex spaces \cite{Wang2002}. These
known results imply the existence of a unique surjective linear
isometric extension for this class
(Corollary~\ref{cor:finiteextension}).

The final result, Theorem~\ref{thm:extension}, concerns real Banach neutral
strongly facially symmetric spaces with property $\mathrm{JP}$. If the domain
space admits a family of finite-dimensional Peirce-zero subspaces that
is directed by inclusion and whose union is norm dense, then every
surjective isometry between the unit spheres extends uniquely to a
surjective linear isometry between the spaces. The proof is based on
the compatibility of local linear extensions and passage to the closure
of the union of the subspaces. Thus, preservation of the facial structure
provides an extension mechanism under the stated approximation
condition.

\section{Preliminaries}

We recall the facts from the theory of facially symmetric spaces needed
to present our results. We use the concepts, terminology, and notation
of \cite{FR1989Operator,FR1992,FR1993,Tanaka2014,Campos2021}.

Let $Z$ be a real or complex normed space. Elements $f,g\in Z$ are said
to be \emph{orthogonal} if
\[
 \left\lVert f+g\right\rVert =\left\lVert f-g\right\rVert =\left\lVert f\right\rVert +\left\lVert g\right\rVert .
\]
In this case we write $f\mathbin{\diamond} g$.

Recall that a convex subset $F$ of the unit ball
\[
 Z_1=\{f\in Z:\left\lVert f\right\rVert \leq 1\}
\]
is called a \emph{face} if, whenever $f,g\in Z_1$, $t\in(0,1)$, and
$tf+(1-t)g\in F$, we have $f,g\in F$. A \emph{norm-exposed face} of
$Z_1$ is a nonempty proper subset of the form
\[
 F_x=\{f\in Z_1:f(x)=1\},
 \qquad x\in Z^*,\quad \left\lVert x\right\rVert =1.
\]
For any subset $S\subset Z$, put
\[
 S^\diamond=\{f\in Z:f\mathbin{\diamond} g\text{ for every }g\in S\},
\]
and call $S^\diamond$ the \emph{orthogonal complement} of $S$. Subsets
$S,T\subset Z$ are said to be orthogonal, written $S\mathbin{\diamond} T$, if
$f\mathbin{\diamond} g$ for every $(f,g)\in S\times T$. An element $u\in Z^*$ is
called a \emph{projective unit} if $\left\lVert u\right\rVert =1$ and
$\langle u,F_u^\diamond\rangle=0$. We denote by $\mathcal{F}$ and
$\mathcal{U}$ the sets of all norm-exposed faces of $Z_1$ and all
projective units in $Z_1^*$, respectively. The unit sphere of $Z$ is
denoted by $\partial Z_1$, that is,
\[
 \partial Z_1=\{f\in Z_1:\left\lVert f\right\rVert =1\}.
\]

A norm-exposed face $F$ of $Z_1$ is called \emph{symmetric} if there
exists a surjective linear isometry $S_F:Z\to Z$ satisfying $S_F^2=I$
(such a map is called a \emph{symmetry}) whose fixed-point space is
\[
 \overline{\operatorname{sp} F}\oplus F^\diamond.
\]

\begin{definition}\label{def:WFS}
A real or complex normed space $Z$ is called a \emph{weakly facially
symmetric space} (a $\mathrm{WFS}$-space) if every norm-exposed face of $Z_1$
is symmetric.
\end{definition}

A \emph{geometric} (or \emph{generalized}) \emph{tripotent} is a
projective unit $u\in Z^*$ such that $F:=F_u$ is a symmetric face and
$S_F^*u=u$ for some symmetry $S_F$ associated with $F$. We denote by
${S\mathcal{F}}$ and ${G\mathcal{U}}$ the sets of all symmetric faces of $Z_1$ and all
geometric tripotents in $Z^*$, respectively.

A partial order on ${G\mathcal{U}}$ is defined by
\[
 u\leq v\quad\Longleftrightarrow\quad F_u\subseteq F_v,
 \qquad u,v\in{G\mathcal{U}}.
\]
In a $\mathrm{WFS}$-space $Z$, each symmetric face $F_u$ determines geometric
(or generalized) Peirce projections $P_k(u)$, $k\in\{0,1,2\}$, as
follows:
\[
 P_1(u)=\frac12(I-S_{F_u}),
 \qquad P_1(u)(Z)=\{f\in Z:S_{F_u}f=-f\}.
\]
The projections $P_0(u)$ and $P_2(u)$ map $Z$ onto $F_u^\diamond$ and
$\overline{\operatorname{sp} F_u}$, respectively. For convenience, we use the notation
\[
 P_k(F_u)=P_k(u),\qquad P_k(u)Z=Z_k(u),\qquad S_{F_u}=S_u,
 \quad k\in\{0,1,2\}.
\]
Clearly,
\begin{gather*}
 P_0(u)+P_2(u)=\frac12(I+S_u),\\
 P_0(u)+P_1(u)+P_2(u)=I,\qquad
 P_0(u)-P_1(u)+P_2(u)=S_u.
\end{gather*}

A geometric tripotent $u\in{G\mathcal{U}}$ is called \emph{minimal} if
$\dim Z_2(u)=1$. We denote by $\mathcal{M}$ the set of all minimal
geometric tripotents.

A geometric tripotent $u\in{G\mathcal{U}}$ is called \emph{maximal} if
$Z_0(u)=\{0\}$. The set of all maximal geometric tripotents on $Z$ is
denoted by ${G\mathcal{U}}_{\max}(Z)$.

\begin{definition}\label{def:dualorth}
Let $Z$ be a normed space. Elements $u,v\in Z^*$ are said to be
\emph{orthogonal} if there exists a symmetric face $F\subset Z_1$ such
that either
\begin{enumerate}
 \item[{\rm(i)}] $u\in\operatorname{im} P_2(F)^*$ and $v\in\operatorname{im} P_0(F)^*$; or
 \item[{\rm(ii)}] $u\in\operatorname{im} P_0(F)^*$ and $v\in\operatorname{im} P_2(F)^*$.
\end{enumerate}
In this case we write $u\mathbin{\diamond} v$ and $v\mathbin{\diamond} u$.
\end{definition}

\begin{definition}\label{def:rank}
A $\mathrm{WFS}$-space $Z$ is said to have \emph{rank $n$}, $n\in\mathbb{N}$, if every
family of pairwise orthogonal geometric tripotents in $Z^*$ has
cardinality at most $n$, and there exists such a family containing
exactly $n$ elements. We write $\operatorname{rank} Z=n$.
\end{definition}

In general,
\[
 \operatorname{rank} Z=\sup\{|G|:G\subset{G\mathcal{U}}\text{ is a pairwise orthogonal family}\}.
\]
Here $|G|$ denotes the cardinality of $G$, and the supremum is understood
in the sense of cardinal numbers.

It was shown in \cite[Lemma~2.5]{FR1986} that if $u,v\in{G\mathcal{U}}$ and
$u\mathbin{\diamond} v$, then $u+v\in{G\mathcal{U}}$.

\begin{definition}\label{def:JP}
A $\mathrm{WFS}$-space $Z$ is said to have property $\mathrm{JP}$ if, for every pair of
orthogonal geometric tripotents $u$ and $v$, we have
\[
 S_uS_v=S_{u+v}.
\]
\end{definition}

\begin{definition}\label{def:SFS}
A $\mathrm{WFS}$-space $Z$ is called a \emph{strongly facially symmetric space}
(an $\mathrm{SFS}$-space) if, for every symmetric face $F_u$ of $Z_1$ and every
$v\in Z^*$ satisfying $\left\lVert v\right\rVert =1$ and $F_u\subset F_v$, we have
$S_u^*v=v$, where $S_u$ is the symmetry associated with $F_u$.
\end{definition}

A contractive projection $Q$ on a normed space $Z$ is called
\emph{neutral} if, for every $f\in Z$, the equality
$\left\lVert Qf\right\rVert =\left\lVert f\right\rVert $ implies $Qf=f$. A normed space $Z$ is called
\emph{neutral} if, for every symmetric face $F_u$, the corresponding
projection $P_2(u)$ is neutral.

A point $x\in Z_1$ is called an \emph{extreme point} if
\[
 x=ty+(1-t)z,\qquad y,z\in Z_1,\quad 0<t<1,
\]
implies $y=z=x$. The set of all extreme points of $Z_1$ is denoted by
$\operatorname{ext} Z_1$. A point $x\in Z_1$ is called a \emph{norm-exposed point}
if there exists $u\in Z^*$ with $\left\lVert u\right\rVert =1$ such that
\[
 \{x\}=\{z\in Z_1:u(z)=1\}.
\]
We denote by $\exp Z_1$ the set of all norm-exposed points of $Z_1$.
In general, $\exp Z_1\subset\operatorname{ext} Z_1$.

\begin{definition}\label{def:PEFE}
An $\mathrm{SFS}$-space $Z$ is said to satisfy axiom
\begin{itemize}
 \item $\mathrm{PE}$ if $\exp Z_1=\operatorname{ext} Z_1$;
 \item $\mathrm{FE}$ if every nonempty norm-closed proper face of the unit ball
 $Z_1$ is norm exposed.
\end{itemize}
\end{definition}

\section{Main results}

\begin{definition}\label{def:maxconvex}
A nonempty subset $C\subset\partial X_1$ is called a \emph{maximal
convex subset} if the following two conditions hold:
\begin{enumerate}[label=\textup{(\arabic*)}]
 \item[{\rm(1)}] $C$ is convex;
 \item[{\rm(2)}] $C$ is maximal by inclusion among convex subsets of
 $\partial X_1$: if $D$ is convex and
 $C\subseteq D\subseteq\partial X_1$, then $C=D$.
\end{enumerate}
\end{definition}

\begin{proposition}\label{prop:orthogonality}
Let $X$ and $Y$ be real or complex normed spaces, and let
$S:\partial X_1\to\partial Y_1$ be a surjective isometry. Then:
\begin{enumerate}[label=\textup{(\roman*)}]
 \item[{\rm (i)}] for every $f,g\in\partial X_1$, we have
 \[
  f\mathbin{\diamond} g\quad\Longleftrightarrow\quad S(f)\mathbin{\diamond} S(g);
 \]
 \item[{\rm (ii)}] for every $A\subseteq\partial X_1$, we have
 \[
  S(A^\diamond\cap\partial X_1)=S(A)^\diamond\cap\partial Y_1;
 \]
 \item[{\rm (iii)}] if $f,g\in\partial X_1$ and $f\mathbin{\diamond} g$, then $f$ and $g$ belong
 to a common maximal convex subset of $\partial X_1$.
\end{enumerate}
\end{proposition}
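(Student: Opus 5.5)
The plan is to reduce everything to two metric quantities for unit vectors, $\lVert f-g\rVert$ and $\lVert f+g\rVert$. For $f,g\in\partial X_1$ we have $\lVert f\rVert+\lVert g\rVert=2$, so
\[
 f\mathbin{\diamond} g\iff \lVert f-g\rVert=2\ \text{ and }\ \lVert f+g\rVert=2 .
\]
The first condition is preserved by $S$ trivially, since $S$ is an isometry: $\lVert S(f)-S(g)\rVert=\lVert f-g\rVert$. The whole difficulty is the second condition, because we do not know a priori that $S(-f)=-S(f)$. I will therefore describe $\lVert f+g\rVert=2$ in terms of maximal convex subsets (Definition~\ref{def:maxconvex}) and use the known invariance of these sets under sphere isometries.

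\emph{Step 1: characterizing $\lVert f+g\rVert=2$.} For $f,g\in\partial X_1$ I claim that $\lVert f+g\rVert=2$ if and only if $f$ and $g$ lie in a common maximal convex subset of $\partial X_1$.

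For the forward direction, suppose $\lVert f+g\rVert=2$. Then $\lVert (f+g)/2\rVert=1$. For $t\in[0,1/2]$ write $h_t=tf+(1-t)g$; the case $t\in[1/2,1]$ is symmetric. Convexity of the norm gives $\lVert h_t\rVert\le 1$. On the other hand, $(f+g)/2=s\,h_t+(1-s)f$ for a suitable $s\in(0,1]$, so
\[
 1\le s\lVert h_t\rVert+(1-s)\le 1,
\]
which forces $\lVert h_t\rVert=1$. Hence the segment $[f,g]$ lies in $\partial X_1$ and is a convex subset of the sphere. The family of convex subsets of $\partial X_1$ containing $[f,g]$, ordered by inclusion, is closed under unions of chains, since a union of a chain of convex sets is convex. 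Zorn's lemma then yields a maximal convex subset $C\supseteq[f,g]$.

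For the converse, if $f,g\in C$ with $C$ convex in $\partial X_1$, then $(f+g)/2\in C$, so $\lVert f+g\rVert=2$.

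\emph{Step 2: invoking invariance.} I will use the known fact (Cheng--Dong; Tanaka \cite{Tanaka2014}; see also \cite{Campos2021}) that a surjective isometry between unit spheres maps maximal convex subsets of $\partial X_1$ bijectively onto maximal convex subsets of $\partial Y_1$. Applying this to both $S$ and $S^{-1}$ and combining with Step 1, we get $\lVert f+g\rVert=2\iff\lVert S(f)+S(g)\rVert=2$. Together with the preservation of $\lVert f-g\rVert$, this gives (i). I expect this step to be the main obstacle. If the cited invariance is not taken as available, it has to be proved from scratch, typically by showing that a maximal convex subset $C$ is characterized metrically, for instance as
\[
 C=\{x\in\partial X_1:\lVert x-y\rVert=2\text{ for all }y\in -C\},
\]
and this requires handling the unknown image of $-C$. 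I will cite the invariance rather than reprove it.

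\emph{Step 3: (ii) and (iii).} For (ii), take $f\in\partial X_1$. By (i) and injectivity of $S$, $f\mathbin{\diamond} a$ for all $a\in A$ if and only if $S(f)\mathbin{\diamond} S(a)$ for all $a\in A$. This gives
\[
 S(A^\diamond\cap\partial X_1)\subseteq S(A)^\diamond\cap\partial Y_1 .
\]
For the reverse inclusion, any $y\in S(A)^\diamond\cap\partial Y_1$ can be written $y=S(f)$ by surjectivity, and (i) applied in the reverse direction gives $f\in A^\diamond$. Finally, (iii) is immediate from Step 1: $f\mathbin{\diamond} g$ gives $\lVert f+g\rVert=2$, so $f$ and $g$ lie in a common maximal convex subset.
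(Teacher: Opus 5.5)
Your proposal is correct and follows essentially the paper's route. The paper also reduces (i) to the trivial preservation of $\lVert f-g\rVert$ together with preservation of the condition $\lVert f+g\rVert=2$; it simply cites the latter as \cite[Lemma~2.1]{Fernandez2017}, whereas your Steps~1--2 reprove it through common maximal convex subsets and their invariance under sphere isometries \cite{Tanaka2014}. Part (ii) is the same set-theoretic argument in both. In (iii) the paper gets a norm-one functional $v$ with $v(f)=v(g)=1$ from Hahn--Banach and applies Zorn's lemma to $F_v$, while you apply Zorn's lemma directly to the segment $[f,g]\subset\partial X_1$, which is only a cosmetic difference.
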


\begin{proof}
\textup{(i)} Since $S$ is an isometry,
\[
 \left\lVert f-g\right\rVert =\left\lVert S(f)-S(g)\right\rVert .
\]
If $f\mathbin{\diamond} g$ (or $S(f)\mathbin{\diamond} S(g)$), then
\[
 \left\lVert f+g\right\rVert =\left\lVert f\right\rVert +\left\lVert g\right\rVert =2
 \quad\text{(or }\left\lVert S(f)+S(g)\right\rVert =\left\lVert S(f)\right\rVert +\left\lVert S(g)\right\rVert =2\text{)}.
\]
Thus, by \cite[Lemma~2.1]{Fernandez2017},
\[
 \left\lVert f+g\right\rVert =\left\lVert S(f)+S(g)\right\rVert .
\]
Consequently,
\begin{align*}
 f\mathbin{\diamond} g
 &\Longleftrightarrow
 \left\lVert f\right\rVert +\left\lVert g\right\rVert =\left\lVert f+g\right\rVert =\left\lVert f-g\right\rVert \\
 &\Longleftrightarrow
 \left\lVert S(f)\right\rVert +\left\lVert S(g)\right\rVert
 =\left\lVert S(f)+S(g)\right\rVert =\left\lVert S(f)-S(g)\right\rVert ,
\end{align*}
that is, $f\mathbin{\diamond} g$ if and only if $S(f)\mathbin{\diamond} S(g)$.

\textup{(ii)} For $x\in\partial X_1$, we have
\begin{equation}\label{eq:orthcomplement}
\begin{aligned}
 x\in A^\diamond\cap\partial X_1
 &\Longleftrightarrow x\mathbin{\diamond} a\text{ for every }a\in A\\
 &\Longleftrightarrow S(x)\mathbin{\diamond} S(a)\text{ for every }a\in A\\
 &\Longleftrightarrow S(x)\in S(A)^\diamond\cap\partial Y_1.
\end{aligned}
\end{equation}
Hence
\[
 S(A^\diamond\cap\partial X_1)
 \subseteq S(A)^\diamond\cap\partial Y_1.
\]
Conversely, let $y\in S(A)^\diamond\cap\partial Y_1$. Since $S$ is
surjective, there exists $x\in\partial X_1$ such that $S(x)=y$. Clearly,
$y\mathbin{\diamond} S(a)$ for every $a\in A$, so $S(x)\mathbin{\diamond} S(a)$. By
\eqref{eq:orthcomplement}, $x\mathbin{\diamond} a$ for every $a\in A$, and therefore
$x\in A^\diamond\cap\partial X_1$. Thus
\[
 y=S(x)\in S(A^\diamond\cap\partial X_1),
\]
which gives the reverse inclusion.

\textup{(iii)} Since $f\mathbin{\diamond} g$ and $\left\lVert f\right\rVert =\left\lVert g\right\rVert =1$, we have
$\left\lVert f+g\right\rVert =2$. The Hahn--Banach theorem provides a norming functional
$v\in X^*$ with $\left\lVert v\right\rVert =1$ such that $v(f+g)=2$. Since
$\left\lVert v\right\rVert =\left\lVert f\right\rVert =\left\lVert g\right\rVert =1$, we have $|v(f)|\leq 1$ and
$|v(g)|\leq 1$. On the other hand,
\[
 v(f+g)=v(f)+v(g)=2,
\]
and hence $\operatorname{Re}v(f)+\operatorname{Re}v(g)=2$. Since
\[
 \operatorname{Re}v(f)\leq|v(f)|\leq 1,
 \qquad \operatorname{Re}v(g)\leq|v(g)|\leq 1,
\]
we obtain $\operatorname{Re}v(f)=\operatorname{Re}v(g)=1$, and therefore
$v(f)=v(g)=1$. Thus $f,g\in F_v$. By Zorn's lemma, $F_v$ is contained
in a maximal convex subset $C$ of $\partial X_1$, so $f,g\in C$.
\end{proof}

In a neutral $\mathrm{SFS}$-space $Z$, for $f\neq 0$ we denote by $v(f)$ the
unique geometric tripotent (see \cite[Theorem~4.3]{FR1989Affine}) such
that
\[
 \langle f,v(f)\rangle=\left\lVert f\right\rVert ,
 \qquad \langle v(f),\{f\}^\diamond\rangle=0.
\]
By \cite[Corollary~1.3(b), Lemmas~2.1 and~2.5]{FR1986}, for all
$f,g\in Z\setminus\{0\}$ we have
\begin{equation}\label{eq:supportorth}
 f\mathbin{\diamond} g\quad\Longleftrightarrow\quad v(f)\mathbin{\diamond} v(g)
 \quad\Longleftrightarrow\quad F_{v(f)}\mathbin{\diamond} F_{v(g)}.
\end{equation}

The following assertions follow from
Proposition~\ref{prop:orthogonality}.

\begin{corollary}\label{cor:rank}
Let $X$ and $Y$ be neutral $\mathrm{SFS}$-spaces, and let
$S:\partial X_1\to\partial Y_1$ be a surjective isometry. Then
\[
 \operatorname{rank} X=\operatorname{rank} Y.
\]
\end{corollary}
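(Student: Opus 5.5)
The plan is to transport pairwise orthogonal families across $S$, using Proposition~\ref{prop:orthogonality}(i) together with the correspondence \eqref{eq:supportorth} between orthogonality of elements and orthogonality of their support tripotents. By symmetry (the inverse $S^{-1}:\partial Y_1\to\partial X_1$ is also a surjective isometry), it suffices to prove $\operatorname{rank} X\le\operatorname{rank} Y$. Here the rank is taken in the cardinal sense as a supremum over pairwise orthogonal families in ${G\mathcal{U}}$.

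The first step is to pass from tripotents to norm-one elements. Let $G=\{u_i\}_{i\in I}\subset{G\mathcal{U}}(X^*)$ be pairwise orthogonal. Each $F_{u_i}$ is a nonempty norm-exposed face, so I pick $f_i\in F_{u_i}\subset\partial X_1$. I then need $v(f_i)\mathbin{\diamond} v(f_j)$ for $i\ne j$, which by \eqref{eq:supportorth} amounts to $f_i\mathbin{\diamond} f_j$. This follows because $u_i\mathbin{\diamond} u_j$ gives $F_{u_i}\mathbin{\diamond} F_{u_j}$ by \cite[Lemma~2.1, Corollary~1.3]{FR1986}, the same results that underlie \eqref{eq:supportorth}. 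Concretely, $u_j\in\operatorname{im}P_0(u_i)^*$ forces $F_{u_j}\subset Z_0(u_i)=F_{u_i}^\diamond$. The $f_i$ are also pairwise distinct, since $f\mathbin{\diamond} f$ would force $\|f\|=\|f-f\|=0$. This yields a pairwise orthogonal family $\{f_i\}$ in $\partial X_1$ of cardinality $|G|$.

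The second step applies Proposition~\ref{prop:orthogonality}(i): the family $\{S(f_i)\}$ is pairwise orthogonal in $\partial Y_1$, and it has the same cardinality because $S$ is injective. Then \eqref{eq:supportorth} in $Y$ shows that $\{v(S(f_i))\}$ is a pairwise orthogonal family of geometric tripotents in $Y^*$. Distinctness holds again, because a tripotent cannot be orthogonal to itself: $v\mathbin{\diamond} v$ would give $F_v\mathbin{\diamond} F_v$, hence $f\mathbin{\diamond} f$ for $f\in F_v$, which is impossible. Therefore $|G|\le\operatorname{rank} Y$, and taking the supremum over $G$ gives $\operatorname{rank} X\le\operatorname{rank} Y$.

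The main obstacle is the first step, namely justifying that orthogonal tripotents have orthogonal faces, so that norm-one representatives can be chosen; this is where neutrality and the SFS structure enter. If a direct citation is unavailable, I would instead use $f_i$ with $v(f_i)=u_i$. One can take $f_i$ to be any element of $F_{u_i}$ whose support tripotent equals $u_i$, for instance via \cite[Theorem~4.3]{FR1989Affine} applied to elements of $Z_2(u_i)$ that are faithful for $u_i$. The first equivalence of \eqref{eq:supportorth} then yields the orthogonality of the $f_i$ immediately.
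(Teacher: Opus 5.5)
Your argument is correct and is essentially the paper's proof. You choose $f_i\in F_{u_i}$ and obtain $f_i\mathbin{\diamond} f_j$ from the fact that orthogonal geometric tripotents have orthogonal faces; the paper cites \cite[Lemma~2.5]{FR1986} for exactly this step. You then transport by Proposition~\ref{prop:orthogonality}(i), pass to $v(S(f_i))$ via \eqref{eq:supportorth}, and symmetrize with $S^{-1}$. Your distinctness checks make explicit the ``same cardinality'' claim that the paper leaves implicit.

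The fallback in your last paragraph is not needed, and it is not available in general. A geometric tripotent need not equal $v(f)$ for any single $f$. For example, take $u=\mathbf{1}\in\ell_\infty(\Gamma)$ for $X=\ell_1(\Gamma)$ with $\Gamma$ uncountable: every $v(f)$ has countable support, so no $f$ has $v(f)=u$.
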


\begin{proof}
Let $\{u_i\}_{i\in I}\subset{G\mathcal{U}}(X)$ be any family of pairwise
orthogonal geometric tripotents. For each $i\in I$, choose
$f_i\in F_{u_i}$. Since $u_i\mathbin{\diamond} u_j$ for $i\neq j$,
\cite[Lemma~2.5]{FR1986} gives $F_{u_i}\mathbin{\diamond} F_{u_j}$, and hence
$f_i\mathbin{\diamond} f_j$. By Proposition~\ref{prop:orthogonality},
$S(f_i)\mathbin{\diamond} S(f_j)$. Put $w_i=v(S(f_i))$. Then
\eqref{eq:supportorth} yields
\[
 w_i\mathbin{\diamond} w_j,\qquad i\neq j.
\]
Thus $\{w_i\}_{i\in I}\subset{G\mathcal{U}}(Y)$ is a family of pairwise
orthogonal geometric tripotents. Every pairwise orthogonal family in
${G\mathcal{U}}(X)$ therefore gives rise to such a family in ${G\mathcal{U}}(Y)$ of the
same cardinality. Hence
\[
 \operatorname{rank} X\leq\operatorname{rank} Y.
\]
The isometry $S^{-1}:\partial Y_1\to\partial X_1$ also preserves
orthogonality, so the same argument gives $\operatorname{rank} X\geq\operatorname{rank} Y$.
Therefore $\operatorname{rank} X=\operatorname{rank} Y$.
\end{proof}

\begin{corollary}\label{cor:orthspan}
Let $X$ and $Y$ be $\mathrm{WFS}$-spaces over the same field $K$, and let
$S:\partial X_1\to\partial Y_1$ be a surjective isometry. Suppose that
$f_1,f_2,\ldots,f_n\in\partial X_1$ are pairwise orthogonal. Then the
map
\[
 T\left(\sum_{i=1}^n\alpha_i f_i\right)
 =\sum_{i=1}^n\alpha_i S(f_i)
\]
is a linear isometry
\[
 T:\operatorname{sp}_K\{f_1,f_2,\ldots,f_n\}
 \longrightarrow\operatorname{sp}_K\{S(f_1),S(f_2),\ldots,S(f_n)\}.
\]
\end{corollary}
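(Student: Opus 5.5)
The plan is to reduce the claim to a norm formula for linear combinations of pairwise orthogonal unit vectors. Concretely, in a $\mathrm{WFS}$-space, if $f_1,\dots,f_n\in\partial X_1$ are pairwise orthogonal, then
\[
 \Bigl\lVert\sum_{i=1}^n\alpha_i f_i\Bigr\rVert=\sum_{i=1}^n|\alpha_i|
\]
for all scalars $\alpha_i\in K$. Once this identity holds in both $X$ and $Y$, the result follows quickly.

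First, I would show that the $f_i$ are linearly independent, so that $T$ is well defined as a linear map. If $\sum\alpha_i f_i=0$, the norm identity forces $\sum|\alpha_i|=0$, so every $\alpha_i=0$. By Proposition~\ref{prop:orthogonality}(i), the elements $S(f_1),\dots,S(f_n)$ are also pairwise orthogonal and lie in $\partial Y_1$. Applying the same identity in $Y$ then gives
\[
 \Bigl\lVert T\Bigl(\sum\alpha_i f_i\Bigr)\Bigr\rVert=\sum|\alpha_i|=\Bigl\lVert\sum\alpha_i f_i\Bigr\rVert .
\]
So $T$ is a linear isometry onto $\operatorname{sp}_K\{S(f_i)\}$, and surjectivity is automatic.

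The main obstacle is the norm identity itself, because pairwise orthogonality alone does not obviously give an $\ell_1$-norm for $n\ge 3$ or for arbitrary scalars. I would use the facial structure of the space for this. For each $f_i$, consider the support tripotent $v(f_i)$ and its face $F_{v(f_i)}$. By \eqref{eq:supportorth}, these faces are pairwise orthogonal. This step as cited requires neutral $\mathrm{SFS}$; in a plain $\mathrm{WFS}$-space I would instead invoke the Friedman--Russo results \cite[Lemma~2.1, 2.5]{FR1986} on orthogonal faces and projective units. By \cite[Lemma~2.5]{FR1986}, sums of orthogonal geometric tripotents are again geometric tripotents. For the lower bound I would build the norming functional
\[
 w=\sum_i \overline{\operatorname{sgn}\alpha_i}\,v(f_i),
\]
which is a geometric tripotent of norm one. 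Since $v(f_j)$ vanishes on $f_i$ for $i\ne j$ (because $f_i\in\{f_j\}^\diamond$), we get $w(\sum\alpha_i f_i)=\sum|\alpha_i|$, and hence $\lVert\sum\alpha_i f_i\rVert\ge\sum|\alpha_i|$. The upper bound follows from the triangle inequality.

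Two points need care: that $\lambda v(f_i)$ with $|\lambda|=1$ is still a geometric tripotent orthogonal to the other $v(f_j)$, and that $\lVert w\rVert=1$. Both follow from \cite[Lemma~2.5]{FR1986} applied inductively to the orthogonal family $\{\lambda_i v(f_i)\}$. If these facts are not available in the stated $\mathrm{WFS}$ generality, I would fall back on an induction on $n$. The inductive step would use that $g=\sum_{i<n}\alpha_i f_i$ is orthogonal to $f_n$, since $\{f_n\}^\diamond$ is a subspace in $\mathrm{WFS}$-spaces: it is $F^\diamond$ for the face of $f_n$, which is the range of $P_0$. Then $\lVert g+\alpha_n f_n\rVert=\lVert g\rVert+|\alpha_n|$ for orthogonal elements; in the complex case this needs the rotation invariance $\lambda f\mathbin{\diamond} g$ together with the definition of $\mathbin{\diamond}$.
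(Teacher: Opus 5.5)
Your proposal follows the paper's argument: the identity $\lVert\sum_{i}\alpha_i f_i\rVert=\sum_i|\alpha_i|$ gives linear independence, Proposition~\ref{prop:orthogonality}(i) makes the $S(f_i)$ pairwise orthogonal, and the same identity in $Y$ shows that $T$ is an isometry. The paper takes this identity for $\mathrm{WFS}$-spaces directly from \cite[Corollary~1.2]{FR1989Affine}, and you should cite it the same way rather than rely on your support-tripotent derivation, which, as you note, needs $v(f_i)$ and therefore a neutral $\mathrm{SFS}$-space.
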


\begin{proof}
By \cite[Corollary~1.2]{FR1989Affine} and
Proposition~\ref{prop:orthogonality}\textup{(i)}, we have
\[
 \left\lVert \sum_{i=1}^n\alpha_i f_i\right\rVert =\sum_{i=1}^n|\alpha_i|.
\]
In particular, $f_1,f_2,\ldots,f_n$ are linearly independent, and
therefore
\[
 T\left(\sum_{i=1}^n\alpha_i f_i\right)
 =\sum_{i=1}^n\alpha_i S(f_i)
\]
defines a linear map. Moreover,
\[
 \left\lVert \sum_{i=1}^n\alpha_i f_i\right\rVert
 =\sum_{i=1}^n|\alpha_i|
 =\left\lVert \sum_{i=1}^n\alpha_i S(f_i)\right\rVert ,
\]
so $T$ is an isometry.
\end{proof}

Observe that this assertion establishes the existence of a linear
isometry taking $f_i$ to $S(f_i)$; it does not establish that this
isometry agrees with $S$ on the entire unit sphere of the linear span.

\begin{proposition}\label{prop:maxfaces}
Let $X$ and $Y$ be Banach $\mathrm{SFS}$-spaces, and let
$S:\partial X_1\to\partial Y_1$ be a surjective isometry. Let $u$ be a
maximal geometric tripotent on $X$. Then:
\begin{enumerate}[label=\textup{(\roman*)}]
 \item[{\rm (i)}] $S(F_u)$ is a symmetric face of the unit ball $Y_1$;
 \item[{\rm (ii)}] the geometric tripotent corresponding to $S(F_u)$ is maximal.
\end{enumerate}
\end{proposition}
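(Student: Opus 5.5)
The plan is to identify $F_u$ for maximal $u$ as a maximal convex subset of $\partial X_1$, transport it through $S$ using the known results on maximal convex subsets, and then recognise the image as the face of a maximal geometric tripotent in $Y$.

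\textbf{Step 1: $F_u$ is a maximal convex subset of $\partial X_1$.} Since $u$ is maximal we have $Z_0(u)=\{0\}$, so $F_u^\diamond=\{0\}$. Let $C\supseteq F_u$ be convex with $C\subseteq\partial X_1$. By the Hahn--Banach argument of Proposition~\ref{prop:orthogonality}(iii) (or Eidelheit separation), $C\subseteq F_v$ for some norm-one $v\in X^*$. Then $F_u\subseteq F_v$, and the SFS axiom gives $S_u^*v=v$.

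Next I use the neutral/SFS structure. For $f\in F_v$ write $f=P_2(u)f+P_1(u)f$, which is possible because $P_0(u)=0$. Since $S_u^*v=v$, $v$ annihilates $Z_1(u)$, so $v(P_2(u)f)=1$. Contractivity of $P_2(u)$ then forces $\|P_2(u)f\|=1=\|f\|$.

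\textbf{Main obstacle: this step uses neutrality.} I need $P_2(u)f=f$, which is exactly neutrality, but the statement assumes only SFS. I would instead invoke the Friedman--Russo fact that in an SFS-space a norm-one $v$ with $F_u\subseteq F_v$ restricts on $Z_2(u)$ to $u$. Combined with the fact that $F_u$ is a maximal face whenever $u$ is a maximal tripotent (again from \cite{FR1989Affine}), this gives $F_v=F_u$ and hence $C=F_u$. If that reference fails, I would add neutrality as a harmless hypothesis, since the paper's main applications are in neutral spaces anyway.

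\textbf{Step 2: transport through $S$.} By \cite{Tanaka2014} (see also \cite{Campos2021}), a surjective isometry between unit spheres of Banach spaces maps maximal convex subsets onto maximal convex subsets. Each such subset is a maximal norm-exposed face $F_w$ with $\|w\|=1$ in $Y^*$, so $S(F_u)=F_w$ for some norm-one $w$.

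\textbf{Step 3: $F_w$ is symmetric with a maximal tripotent.} Choose $g\in F_w$ and let $w'=v(g)$ be its support tripotent. Then $g\in F_{w'}$, so $F_{w'}\cap F_w\neq\emptyset$. I aim to show $F_{w'}\subseteq F_w$ via the same maximality/SFS argument, and then enlarge $w'$ to a maximal tripotent $\tilde w$ with $F_{w'}\subseteq F_{\tilde w}$.

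For this I use Proposition~\ref{prop:orthogonality}(ii) with $A=F_u$: since $F_u^\diamond\cap\partial X_1=\emptyset$, we get $S(F_u)^\diamond\cap\partial Y_1=\emptyset$. Hence no nonzero element of $Y$ is orthogonal to $F_w$. If $Z_0(w')\neq\{0\}$, then any $h\in Z_0(w')$ with $\|h\|=1$ satisfies $h\diamond F_{w'}$, and I would use that $h+$(an element of $F_{w'}$) lies in a larger convex subset of the sphere to contradict the maximality of $F_w$. This shows $w'$ is maximal, so $F_{w'}$ is a maximal face, and since it meets $F_w$, maximality gives $F_{w'}=F_w$.

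Therefore $S(F_u)=F_{w'}$ is a symmetric face and its tripotent $w'$ is maximal, which proves both (i) and (ii).
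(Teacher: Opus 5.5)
\textbf{The genuine gap is in Step~3.} First, the support tripotent $v(g)$ is only available (via \cite[Theorem~4.3]{FR1989Affine}) in \emph{neutral} SFS-spaces, and neutrality is not assumed. More seriously, the argument fails. A norm-one $h\in Y_0(w')$ satisfies $h\mathbin{\diamond} F_{w'}$, not $h\mathbin{\diamond} F_w$. So $\operatorname{co}(F_{w'}\cup\{h\})$ is a convex subset of $\partial Y_1$ that need not contain $F_w$, and it contradicts nothing.

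In fact $w'=v(g)$ is generally not maximal. In real $\ell_1^2$, which is a neutral SFS-space, take $F_w=[e_1,e_2]$ and $g=e_1$. Then $F_{v(g)}=\{e_1\}$ and $Y_0(v(g))=\mathbb{R}e_2\neq\{0\}$. Your closing claim, that a maximal face meeting $F_w$ must equal $F_w$, is also false: $[e_1,e_2]$ and $[e_1,-e_2]$ are distinct maximal faces meeting at $e_1$. This is exactly the phenomenon behind Proposition~\ref{prop:intersection}.

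The missing observation is that an SFS-space is by definition WFS, so \emph{every} norm-exposed face of $Y_1$ is symmetric. Once Step~2 gives $S(F_u)=F_w$, part (i) is immediate. Then \cite[Proposition~1.6]{FR1989Affine} provides a geometric tripotent $\tilde w$ with $F_{\tilde w}=S(F_u)$. For (ii), your orthogonal-complement idea works when applied to $\tilde w$ itself. The space $Y_0(\tilde w)=F_{\tilde w}^\diamond$ is a subspace, and Proposition~\ref{prop:orthogonality}(ii) gives $F_{\tilde w}^\diamond\cap\partial Y_1=S(F_u^\diamond\cap\partial X_1)=\varnothing$. Hence $Y_0(\tilde w)=\{0\}$. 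The paper instead notes that $F_{\tilde w}$ is a maximal convex subset and applies the converse direction of Lemma~\ref{lem:maximal}, via $\operatorname{co}(F_{\tilde w}\cup\{h\})\subseteq\partial Y_1$.

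\textbf{Step~1.} Your ``main obstacle'' is illusory. The fallback you propose, citing that $F_u$ is a maximal face whenever $u$ is maximal, is circular: it is equivalent to what Step~1 is meant to prove. Your own ingredients already finish the argument without neutrality.
\begin{itemize}
\item $u$ and $v$ both equal $1$ on $F_u$, so they agree on $\overline{\operatorname{sp}}\,F_u=X_2(u)$.
\item Both vanish on $X_1(u)$, since $S_u^*v=v$ and $S_u^*u=u$.
\item $X_0(u)=\{0\}$.
\end{itemize}
Hence $v=u$, and so $C\subseteq F_v=F_u$. This is a more self-contained route than the paper's. The paper enlarges $F_u$ to a maximal convex set, identifies it as a norm-exposed (hence symmetric) face via \cite{Campos2021}, and then uses \cite[Lemma~2.7]{FR1986} to contradict $X_0(u)=\{0\}$.
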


We first prove the following auxiliary lemma.

\begin{lemma}\label{lem:maximal}
Let $Z$ be an $\mathrm{SFS}$-space. The following assertions are equivalent:
\begin{enumerate}[label=\textup{(\roman*)}]
 \item[{\rm (i)}] $u$ is a maximal geometric tripotent;
 \item[{\rm (ii)}] $F_u$ is a maximal convex subset of $\partial Z_1$.
\end{enumerate}
\end{lemma}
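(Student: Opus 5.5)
We prove both implications using two standard facts. First, every convex subset $C$ of $\partial Z_1$ lies in a norm-exposed face. Indeed, $C$ is disjoint from the open unit ball, so the Hahn--Banach separation theorem gives $v\in Z^*$ with $\left\lVert v\right\rVert =1$ and $\operatorname{Re}v\geq 1$ on $C$. Since $|v|\le1$ on $Z_1$, this forces $v=1$ on $C$, so $C\subseteq F_v$. Second, in an $\mathrm{SFS}$-space every norm-exposed face has the form $F_w$ with $w\in{G\mathcal{U}}$.

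\textbf{(i)$\Rightarrow$(ii).} Let $u$ be maximal, so $Z_0(u)=\{0\}$ and hence $P_2(u)+P_1(u)=I$. The set $F_u$ is convex and lies in $\partial Z_1$. Let $D$ be convex with $F_u\subseteq D\subseteq\partial Z_1$. By the separation step, $D\subseteq F_v$ for some $v\in Z^*$ with $\left\lVert v\right\rVert =1$, so $F_u\subseteq F_v$.

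The strong facial symmetry of Definition~\ref{def:SFS} gives $S_u^*v=v$. Since $S_u=-I$ on $Z_1(u)$, this means $v$ vanishes on $Z_1(u)$. Therefore $v=v\circ P_2(u)$.

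Next we need $v\circ P_2(u)=u$. The restriction of $v$ to $Z_2(u)$ has norm at most $1$ and equals $1$ on $F_u$. By Friedman--Russo theory, $Z_2(u)=\overline{\operatorname{sp}F_u}$ is a base-norm space with base $F_u$, and $u$ is the corresponding order unit \cite{FR1989Affine,FR1992}. Hence any norm-one functional equal to $1$ on $F_u$ coincides with $u$ on $Z_2(u)$. Using also $u=u\circ P_2(u)$, we get $v=u$.

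It follows that $F_v=F_u$, so $D\subseteq F_u$ and hence $D=F_u$. Thus $F_u$ is a maximal convex subset of $\partial Z_1$.

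\textbf{(ii)$\Rightarrow$(i).} Assume $F_u$ is a maximal convex subset of $\partial Z_1$, and suppose for contradiction that $Z_0(u)=F_u^\diamond\neq\{0\}$. Choose $f\in F_u^\diamond$ with $\left\lVert f\right\rVert =1$ and put $w=v(f)$. By \eqref{eq:supportorth}, together with $f\mathbin{\diamond} g$ for every $g\in F_u$, we get $w\mathbin{\diamond} u$. Then $u+w\in{G\mathcal{U}}$ by \cite[Lemma~2.5]{FR1986}.

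We claim $F_{u+w}$ strictly contains $F_u$.
\begin{itemize}
 \item Since $w\in\operatorname{im}P_0(u)^*$, it vanishes on $Z_2(u)\supseteq F_u$. So $u+w=1$ on $F_u$, giving $F_u\subseteq F_{u+w}$.
 \item Since $u$ vanishes on $Z_0(u)\ni f$, we have $(u+w)(f)=w(f)=1$. So $f\in F_{u+w}$.
 \item Finally $f\notin F_u$, because $u(f)=0$.
\end{itemize}
Thus $F_{u+w}$ is a convex subset of $\partial Z_1$ strictly containing $F_u$, contradicting maximality. Hence $Z_0(u)=\{0\}$, and $u$ is maximal.

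The step we expect to be the main obstacle is the identification $v\circ P_2(u)=u$ in (i)$\Rightarrow$(ii). It relies on the base-norm structure of $Z_2(u)$ with base $F_u$, equivalently on the fact that a norm-one functional equal to $1$ on $F_u$ is uniquely determined on $\overline{\operatorname{sp}F_u}$. This must be imported carefully from \cite{FR1989Affine,FR1992}.
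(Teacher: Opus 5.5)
Your (i)$\Rightarrow$(ii) is correct and differs from the paper in its final step. Both arguments get a norm-one $v$ with $F_u\subseteq F_v$ from a maximal convex set; the paper cites \cite{Campos2021} for this, while you use Hahn--Banach separation directly. The paper then identifies $v$ as a geometric tripotent via \cite[Proposition~1.6]{FR1989Affine} and reaches a contradiction from \cite[Lemma~2.7]{FR1986}: if $F_u\subsetneq F_v$, then $Z_0(u)\cap F_v\neq\varnothing$. You instead apply the $\mathrm{SFS}$ axiom to $v$ and use $Z=Z_2(u)\oplus Z_1(u)$ to conclude $v=u$. This avoids both citations and shows that $u$ is the only norm-one functional whose face contains $F_u$. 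The step you flag as the main obstacle is actually trivial. Two continuous linear functionals that both equal $1$ on $F_u$ agree on $\operatorname{sp}F_u$ by linearity, and hence on $\overline{\operatorname{sp}F_u}=Z_2(u)$ by continuity. No base-norm structure is needed.

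The gap is in (ii)$\Rightarrow$(i). You use the support tripotent $w=v(f)$ and \eqref{eq:supportorth}, but these come from \cite[Theorem~4.3]{FR1989Affine} and are available only in \emph{neutral} $\mathrm{SFS}$-spaces; this is exactly how the paper introduces them. The lemma assumes only that $Z$ is an $\mathrm{SFS}$-space. Without neutrality, neither the existence of a geometric tripotent supporting $f$ nor its orthogonality to $u$ is justified, so the construction of $F_{u+w}$ fails for the lemma as stated.

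The detour through tripotents is unnecessary. Take $h\in Z_0(u)=F_u^\diamond$ with $\|h\|=1$. For $g\in F_u$, the orthogonality $g\mathbin{\diamond} h$ gives $\|g+h\|=\|g\|+\|h\|$, hence $\|ag+bh\|=a+b$ for all $a,b\geq 0$. Therefore $\operatorname{co}(F_u\cup\{h\})\subseteq\partial Z_1$, and this set strictly contains $F_u$ because $u(h)=0$. This is the paper's argument and works in any $\mathrm{SFS}$-space.

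In the neutral case your argument does go through, with one compressed step. Passing from $f\mathbin{\diamond} g$ for all $g\in F_u$ to $w\mathbin{\diamond} u$ needs the remark that $g\in F_{v(g)}$. Then \eqref{eq:supportorth} gives $F_w\mathbin{\diamond} F_u$, and \cite[Lemma~2.5]{FR1986} yields $w\mathbin{\diamond} u$.
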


\begin{proof}
\textup{(i)$\Rightarrow$(ii).} Since $F_u$ is a nonempty convex subset
of $\partial Z_1$, Zorn's lemma provides a maximal convex subset
$C\subseteq\partial Z_1$ containing $F_u$. By
\cite[p.~4]{Campos2021}, $C$ is a norm-exposed face of $Z_1$. Since
$Z$ is an $\mathrm{SFS}$-space, $C$ is symmetric. Thus, by
\cite[Proposition~1.6]{FR1989Affine}, there exists a unique geometric
tripotent $v$ such that $C=F_v$. Hence $F_u\subseteq F_v$. Suppose
that $F_u\neq F_v$. By \cite[Lemma~2.7]{FR1986},
\[
 Z_0(u)\cap F_v\neq\varnothing,
\]
contrary to the maximality of $u$. Consequently, $F_u=F_v=C$, and
$F_u$ is a maximal convex subset of $\partial Z_1$.

\textup{(ii)$\Rightarrow$(i).} Suppose that $Z_0(u)\neq\{0\}$. Choose
$h\in Z_0(u)$ with $\left\lVert h\right\rVert =1$. Since
$Z_2(u)\cap Z_0(u)=\{0\}$, we have $h\notin Z_2(u)$. Consider the set
\[
 C=\operatorname{co}(F_u\cup\{h\}).
\]
Since $Z_2(u)\mathbin{\diamond} Z_0(u)$, for every $f\in F_u$ and $t\in(0,1)$,
\[
 f\mathbin{\diamond}\frac{t}{1-t}h.
\]
Therefore
\[
 \left\lVert f+\frac{t}{1-t}h\right\rVert
 =\left\lVert f\right\rVert +\frac{t}{1-t}\left\lVert h\right\rVert
 =1+\frac{t}{1-t}.
\]
It follows that $\left\lVert (1-t)f+th\right\rVert =1$, so
$(1-t)f+th\in\partial Z_1$. For $t=0$ or $t=1$, this expression is
$f$ or $h$, respectively. Thus $C\subseteq\partial Z_1$. The set
$C$ is convex, $F_u\subset C$, and $F_u\neq C$. This contradicts the
maximality of $F_u$ as a convex subset of $\partial Z_1$. Hence
$Z_0(u)=\{0\}$.
\end{proof}

\begin{proof}[Proof of Proposition~\ref{prop:maxfaces}]
\textup{(i)} Since $u$ is a maximal geometric tripotent,
Lemma~\ref{lem:maximal} shows that $F_u$ is a maximal convex subset
of $\partial X_1$. By \cite[Lemma~3.5]{Tanaka2014}, $S(F_u)$ is a
maximal convex subset of $\partial Y_1$. By
\cite[p.~4]{Campos2021}, $S(F_u)$ is a norm-exposed face of $Y_1$.
Since $Y$ is an $\mathrm{SFS}$-space, $S(F_u)$ is symmetric.

\textup{(ii)} By \cite[Proposition~1.6]{FR1989Affine}, there is a
unique geometric tripotent $v$ corresponding to the symmetric face
$S(F_u)$, that is, $S(F_u)=F_v$. Since $F_v=S(F_u)$ is a maximal
convex subset of $\partial Y_1$, Lemma~\ref{lem:maximal} implies that
$v$ is maximal.
\end{proof}

The following assertion follows from this proposition.

\begin{corollary}\label{cor:maxbijection}
Let $X$ and $Y$ be Banach $\mathrm{SFS}$-spaces, and let
$S:\partial X_1\to\partial Y_1$ be a surjective isometry. Then there
exists a unique bijection
\[
 \Phi:{G\mathcal{U}}_{\max}(X)\longrightarrow{G\mathcal{U}}_{\max}(Y)
\]
determined by the equality
\[
 S(F_u)=F_{\Phi(u)}.
\]
\end{corollary}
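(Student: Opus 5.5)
The plan is to define $\Phi$ directly through Proposition~\ref{prop:maxfaces} and then check that it is well defined, injective and surjective, using the inverse isometry for surjectivity.

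\emph{Definition of $\Phi$.} Let $u\in{G\mathcal{U}}_{\max}(X)$. By Proposition~\ref{prop:maxfaces}(i), $S(F_u)$ is a symmetric face of $Y_1$. By \cite[Proposition~1.6]{FR1989Affine}, there is a unique geometric tripotent $v$ with $S(F_u)=F_v$, and Proposition~\ref{prop:maxfaces}(ii) shows that $v$ is maximal. We set $\Phi(u)=v$. The uniqueness in \cite[Proposition~1.6]{FR1989Affine} gives both that $\Phi$ is well defined and that it is the only map satisfying $S(F_u)=F_{\Phi(u)}$. Indeed, if $\Psi$ were another such map, then $F_{\Psi(u)}=S(F_u)=F_{\Phi(u)}$, so $\Psi(u)=\Phi(u)$.

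\emph{Injectivity.} Suppose $\Phi(u_1)=\Phi(u_2)$. Then $S(F_{u_1})=S(F_{u_2})$, and since $S$ is injective, $F_{u_1}=F_{u_2}$. The uniqueness of the tripotent associated with a symmetric face (or simply $u_1\le u_2\le u_1$ together with that uniqueness) then gives $u_1=u_2$.

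\emph{Surjectivity.} The map $S^{-1}:\partial Y_1\to\partial X_1$ is also a surjective isometry between unit spheres of Banach $\mathrm{SFS}$-spaces. Applying Proposition~\ref{prop:maxfaces} to $S^{-1}$, every $w\in{G\mathcal{U}}_{\max}(Y)$ gives $S^{-1}(F_w)=F_u$ for some $u\in{G\mathcal{U}}_{\max}(X)$. Hence $S(F_u)=F_w$, that is, $\Phi(u)=w$. The same argument shows that the map $\Psi$ induced by $S^{-1}$ is the two-sided inverse of $\Phi$.

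The only step that needs care is the uniqueness of the tripotent attached to a face, that is, that $F_{u}=F_{u'}$ forces $u=u'$ for geometric tripotents. This is exactly the content of the cited \cite[Proposition~1.6]{FR1989Affine}, so I do not expect a genuine obstacle.
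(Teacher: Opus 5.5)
Your proposal is correct and follows essentially the same route as the paper. Both define $\Phi$ via Proposition~\ref{prop:maxfaces} together with the uniqueness of the tripotent attached to a symmetric face \cite[Proposition~1.6]{FR1989Affine}, deduce injectivity from the injectivity of $S$, and obtain surjectivity by applying Proposition~\ref{prop:maxfaces} to $S^{-1}$.
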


\begin{proof}
The existence and uniqueness of $\Phi$ follow from
Proposition~\ref{prop:maxfaces}. Moreover, if $\Phi(u)=\Phi(v)$, then
\[
 S(F_u)=F_{\Phi(u)}=F_{\Phi(v)}=S(F_v).
\]
Since $S$ is injective, $F_u=F_v$. By
\cite[Proposition~1.6]{FR1989Affine}, $u=v$. Thus $\Phi$ is injective.

Now let $w\in{G\mathcal{U}}_{\max}(Y)$. Applying
Proposition~\ref{prop:maxfaces} to the surjective isometry $S^{-1}$
gives $S^{-1}(F_w)=F_u$ for some $u\in{G\mathcal{U}}_{\max}(X)$. Hence
$S(F_u)=F_w$, so $\Phi(u)=w$. Thus $\Phi$ is surjective.
\end{proof}

\begin{proposition}\label{prop:intersection}
Let $Z$ be an $\mathrm{SFS}$-space. For every $u\in{G\mathcal{U}}(Z)$, there exist
$e_1,e_2\in{G\mathcal{U}}_{\max}(Z)$ such that
\[
 F_u=F_{e_1}\cap F_{e_2}.
\]
\end{proposition}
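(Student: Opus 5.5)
The plan is to obtain $e_1,e_2$ as $e_1=u+w$ and $e_2=u+w'$, where $w$ and $w'$ are geometric tripotents lying in the Peirce-zero part of $u$ whose faces are disjoint. We may assume $Z_0(u)\neq\{0\}$; otherwise $u$ is already maximal and we take $e_1=e_2=u$.

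\emph{Choosing $w$ and $w'$.} First pick $h\in Z_0(u)$ with $\lVert h\rVert=1$ and set $w=v(h)$. By \eqref{eq:supportorth}, $w$ is orthogonal to $v(f)$ for every $f\in F_u$, hence $w\mathbin{\diamond} u$. Next put $w'=v(-h)=-w$, the geometric tripotent supporting $-h$; note that $-w$ is again a geometric tripotent because $S_{F_{-w}}=S_{F_w}$. Then \cite[Lemma~2.5]{FR1986} makes $u+w$ and $u-w$ geometric tripotents. The faces satisfy $F_w\cap F_{-w}=\varnothing$, since an element $f$ in both would give $w(f)=1=-w(f)$.

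\emph{Enlarging to maximal tripotents.} The sums $u\pm w$ need not be maximal, so this step has to be iterated or done with a maximal orthogonal family. By Zorn's lemma, choose a family $\{w_i\}$ of geometric tripotents in $\operatorname{im}P_0(u)^*$ that is maximal among those mutually orthogonal and orthogonal to $u$. Its (weak$^*$) sum $w$ should be a geometric tripotent with $Z_0(u+w)=\{0\}$. Alternatively, and more cleanly in the Banach setting, take a maximal convex subset $C$ of $\partial Z_1$ containing $F_u$. Lemma~\ref{lem:maximal}, together with the argument in its proof, shows that $C=F_{e_1}$ with $e_1$ maximal and $F_u\subseteq F_{e_1}$. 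Then \cite[Lemma~2.7]{FR1986} gives $e_1=u+w$ with $w\mathbin{\diamond} u$, where $w$ is the tripotent whose face is $F_{e_1}\cap Z_0(u)$.

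\emph{Reflecting to get $e_2$ and computing the intersection.} Set $e_2=u-w$. This is the image of $e_1$ under the dual Peirce reflection $S_w^*$, or equivalently $P_2(u)^*e_1-P_0(u)^*e_1$. Since $S_u$, and the map $P_2(u)-P_0(u)+P_1(u)$, is a surjective isometry, $e_2$ is again a geometric tripotent. Maximality passes over because $Z_0(u-w)=Z_0(u+w)=\{0\}$: the Peirce spaces of $u\pm w$ are built from those of $u$ and $\pm w$, and these coincide. Finally, if $f\in F_{e_1}\cap F_{e_2}$, then $u(f)+w(f)=1$ and $u(f)-w(f)=1$, so $u(f)=1$ and $f\in F_u$. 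Conversely, $F_u\subseteq F_{u\pm w}$ because $w$ vanishes on $Z_2(u)$.

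\emph{Main obstacle.} The hard step is proving that $u-w$ is a maximal geometric tripotent, which requires identifying its Peirce decomposition with that of $u+w$. I would first try to show that $-w$ is a geometric tripotent with $F_{-w}=-F_w$ and the same symmetry, and then invoke \cite[Lemma~2.5]{FR1986} together with the description of $Z_0$ of an orthogonal sum as $Z_0(u)\cap Z_0(\pm w)$. If that description is not directly available in the SFS setting, I would instead show that $F_{u-w}$ is a maximal convex subset of $\partial Z_1$ and apply Lemma~\ref{lem:maximal}. To do so, I would use the isometry $f\mapsto P_2(u)f-P_0(u)f$ on $Z_2(u)\oplus Z_0(u)$, which maps $F_{u+w}$ onto $F_{u-w}$.
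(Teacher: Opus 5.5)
The argument you finally settle on is the paper's proof:
\begin{itemize}
\item a maximal $e_1$ with $F_u\subseteq F_{e_1}$, obtained from a maximal convex subset and the proof of Lemma~\ref{lem:maximal};
\item $e_1=u+w$ with $w\mathbin{\diamond}u$ (the paper cites \cite[Lemma~4.2]{FR1989Affine} for this);
\item $e_2=u-w$, a geometric tripotent by \cite[Lemma~2.5]{FR1986} applied to $u$ and $-w$;
\item $u=\frac12(e_1+e_2)$ for the intersection;
\item maximality from $Z_0(u-w)=Z_0(u)\cap Z_0(-w)=Z_0(u)\cap Z_0(w)=Z_0(u+w)=\{0\}$.
\end{itemize}
The description of $Z_0$ of an orthogonal sum that you were unsure of is available in the SFS setting. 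It is \cite[Lemma~1.8]{FR1989Affine}, which is exactly what the paper uses. Your direct check that $F_u\subseteq F_{u\pm w}$ (because $w$ vanishes on $Z_2(u)$ and $\lVert u\pm w\rVert=1$) is a fine shortcut for the paper's appeal to the order relation $u\le e_i$.

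However, several of the other justifications you offer are wrong and should be discarded, not kept as alternatives.

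First, $e_2$ is not $S_w^*e_1$. Since $S_w^*w=w$ and $u\in\operatorname{im}P_0(w)^*$, the map $S_w^*$ fixes $u+w$, and so does $S_u^*$. Moreover, $P_2(u)+P_1(u)-P_0(u)$ is not an isometry in general. Take the $2\times 2$ complex matrices with the trace norm (the predual of $M_2(\mathbb{C})$, a neutral SFS-space) and $u=e_{11}$. This map flips the sign of the $(2,2)$ entry, so it sends $\begin{pmatrix}1&1\\1&-1\end{pmatrix}$, of trace norm $2\sqrt2$, to $\begin{pmatrix}1&1\\1&1\end{pmatrix}$, of trace norm $2$.

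Second, your fallback through the isometry $f\mapsto P_2(u)f-P_0(u)f$ on $Z_2(u)\oplus Z_0(u)$ fails, because $F_{u+w}$ need not lie in $Z_2(u)\oplus Z_0(u)$. In the same example, $e_1=1$ is maximal and $w=e_{22}$. Then $F_{e_1}$ is the set of density matrices, which contains $\frac12\begin{pmatrix}1&1\\1&1\end{pmatrix}$, an element with nonzero $Z_1(u)$-component. So the fallback breaks down precisely in the situation the proposition is about.

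Finally, the opening construction $w=v(h)$ uses the support tripotent, which the paper only has for neutral SFS-spaces. The Zorn family with a weak$^*$ sum is also not justified. Neither is needed once $e_1$ is taken from a maximal convex subset. With these pieces removed, what remains is a correct proof, and it coincides with the paper's.
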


\begin{proof}
The proof of Lemma~\ref{lem:maximal} shows that there exists
$w\in{G\mathcal{U}}_{\max}(Z)$ such that $F_u\subseteq F_w$. If $u=w$, the
proof is complete. Assume that $u\neq w$. By
\cite[Lemma~4.2]{FR1989Affine},
\[
 v=w-u\in{G\mathcal{U}}(Z),\qquad u\mathbin{\diamond} v.
\]
Thus $w=u+v$. Clearly, $u\mathbin{\diamond}(-v)$. Put
\[
 e_1=u+v=w,\qquad e_2=u-v.
\]
By \cite[Lemma~2.5]{FR1986}, $e_2$ is also a geometric tripotent.
Hence
\[
 e_1-u=v\mathbin{\diamond} u,\qquad e_2-u=-v\mathbin{\diamond} u.
\]
Applying \cite[Lemma~4.2]{FR1989Affine} again, we obtain $u\leq e_1$
and $u\leq e_2$, and therefore
\[
 F_u\subseteq F_{e_1}\cap F_{e_2}.
\]
Conversely, let $f\in F_{e_1}\cap F_{e_2}$. Then
$e_1(f)=e_2(f)=1$. Since $u=(e_1+e_2)/2$, we have
\[
 u(f)=\frac{e_1(f)+e_2(f)}2=1.
\]
Thus $f\in F_u$, and hence $F_{e_1}\cap F_{e_2}\subseteq F_u$.
Combining the two inclusions gives
\[
 F_u=F_{e_1}\cap F_{e_2}.
\]

It remains to show that $e_2$ is maximal. By
\cite[Lemma~1.8]{FR1989Affine},
\[
 Z_0(e_1)=Z_0(u+v)=Z_0(u)\cap Z_0(v).
\]
Since $e_1=w$ is maximal,
$\{0\}=Z_0(e_1)=Z_0(u)\cap Z_0(v)$. As $Z_0(-v)=Z_0(v)$, we obtain
\[
 \{0\}=Z_0(u)\cap Z_0(-v)=Z_0(u-v).
\]
Thus $e_2$ is maximal.
\end{proof}

\begin{theorem}\label{thm:faces}
Let $X$ and $Y$ be Banach $\mathrm{SFS}$-spaces, and let
$S:\partial X_1\to\partial Y_1$ be a surjective isometry. Then, for
every symmetric face $F$ of the unit ball $X_1$, its image $S(F)$ is a
symmetric face of the unit ball $Y_1$.
\end{theorem}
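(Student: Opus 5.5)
Every symmetric face $F$ of $X_1$ has the form $F=F_u$ for a unique $u\in{G\mathcal{U}}(X)$ by \cite[Proposition~1.6]{FR1989Affine}. The plan is to transport $F$ through maximal faces, where the action of $S$ is already understood. By Proposition~\ref{prop:intersection}, choose $e_1,e_2\in{G\mathcal{U}}_{\max}(X)$ with $F_u=F_{e_1}\cap F_{e_2}$. Since $S$ is a bijection, images commute with intersections, so
\[
 S(F_u)=S(F_{e_1})\cap S(F_{e_2})=F_{\Phi(e_1)}\cap F_{\Phi(e_2)},
\]
where $\Phi$ is the bijection of Corollary~\ref{cor:maxbijection}. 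Thus $S(F)$ is an intersection of two maximal symmetric faces of $Y_1$. In particular it is a nonempty convex subset of $\partial Y_1$, nonempty because it is the image of $F\neq\varnothing$. The remaining task is to show that this intersection is itself a norm-exposed face, and hence symmetric because $Y$ is an $\mathrm{SFS}$-space.

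For the norm-exposedness I would use the midpoint functional $w=\tfrac12(\Phi(e_1)+\Phi(e_2))\in Y^*$. Clearly $\|w\|\le 1$. On $F_{\Phi(e_1)}\cap F_{\Phi(e_2)}$ we have $w=1$, and this set is nonempty, so $\|w\|=1$. Conversely, let $g\in Y_1$ with $w(g)=1$. Since $|\Phi(e_i)(g)|\le1$, the averaging argument from Proposition~\ref{prop:orthogonality}(iii) forces $\Phi(e_1)(g)=\Phi(e_2)(g)=1$. Hence
\[
 F_w=F_{\Phi(e_1)}\cap F_{\Phi(e_2)}=S(F),
\]
and $F_w$ is a proper norm-exposed face. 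Since $Y$ is $\mathrm{SFS}$ (in particular $\mathrm{WFS}$), every norm-exposed face is symmetric, so $S(F)$ is a symmetric face. In the degenerate case $u=e_1=e_2$ maximal, the statement is already Proposition~\ref{prop:maxfaces}.

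The main obstacle I expect is notational rather than substantive. One must check that ``symmetric face'' here requires only norm-exposedness plus the $\mathrm{WFS}$ axiom, not a priori a tripotent representing it. If one also wants the associated geometric tripotent explicitly, the natural candidate $\tfrac12(\Phi(e_1)+\Phi(e_2))$ need not be a projective unit. In that case I would instead invoke \cite[Proposition~1.6]{FR1989Affine} to obtain the unique tripotent $v$ with $F_v=F_w$. A further point to double-check is that $F_w$ is proper, i.e.\ $F_w\neq Y_1$. This holds because $F_w\subseteq\partial Y_1$ and $0\in Y_1$.
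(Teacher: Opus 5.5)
Your proposal is correct and follows the paper's proof essentially step for step. You write $S(F_u)=F_{\Phi(e_1)}\cap F_{\Phi(e_2)}$ via Proposition~\ref{prop:intersection} and Corollary~\ref{cor:maxbijection}, identify this intersection with $F_w$ for the midpoint functional $w=\tfrac12(\Phi(e_1)+\Phi(e_2))$ using the real-part argument, and conclude symmetry from the $\mathrm{SFS}$ axiom; your added remarks on properness of $F_w$ and on $w$ not needing to be a tripotent are harmless but not needed.
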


\begin{proof}
Let $F=F_u$, where $u\in{G\mathcal{U}}(X)$. By
Proposition~\ref{prop:intersection}, there exist
$e_1,e_2\in{G\mathcal{U}}_{\max}(X)$ such that
\[
 F_u=F_{e_1}\cap F_{e_2}.
\]
Put $v_i=\Phi(e_i)$, $i=1,2$, where $\Phi$ is the bijection from
Corollary~\ref{cor:maxbijection}. Then
\[
 S(F_{e_i})=F_{v_i},\qquad i=1,2.
\]
Since $S$ is injective,
\[
 S(F_u)=S(F_{e_1}\cap F_{e_2})
 =S(F_{e_1})\cap S(F_{e_2})=F_{v_1}\cap F_{v_2}.
\]
Consider the functional
\[
 w=\frac{v_1+v_2}2\in Y^*.
\]
Clearly, $\left\lVert w\right\rVert \leq 1$. Since $F_u\neq\varnothing$, choose
$y_0\in S(F_u)$. Then $v_1(y_0)=v_2(y_0)=1$, so $w(y_0)=1$.
Since $\left\lVert y_0\right\rVert =1$, we obtain $\left\lVert w\right\rVert =1$.

We now show that $F_w=F_{v_1}\cap F_{v_2}$. The inclusion
$F_{v_1}\cap F_{v_2}\subseteq F_w$ follows directly from the
definition of $w$. Conversely, if $y\in F_w$, then
\[
 1=\operatorname{Re}w(y)
 =\frac{\operatorname{Re}v_1(y)+\operatorname{Re}v_2(y)}2.
\]
Since $|v_i(y)|\leq 1$, we have $\operatorname{Re}v_i(y)\leq 1$.
Hence $\operatorname{Re}v_1(y)=\operatorname{Re}v_2(y)=1$, and
therefore $v_1(y)=v_2(y)=1$. Thus $y\in F_{v_1}\cap F_{v_2}$.
Consequently,
\[
 S(F_u)=F_w.
\]
This is a nonempty norm-exposed face of the unit ball $Y_1$. Since
$Y$ is an $\mathrm{SFS}$-space, it is symmetric.
\end{proof}

\begin{theorem}\label{thm:tripotents}
Let $X$ and $Y$ be Banach $\mathrm{SFS}$-spaces, and let
$S:\partial X_1\to\partial Y_1$ be a surjective isometry. Then there
exists a unique bijection
\[
 \Delta:{G\mathcal{U}}(X)\longrightarrow{G\mathcal{U}}(Y),
 \qquad \left.\Delta\right|_{{G\mathcal{U}}_{\max}(X)}=\Phi,
\]
determined by the equality
\[
 S(F_u)=F_{\Delta(u)}.
\]
It preserves order in both directions:
\[
 u\leq v\quad\Longleftrightarrow\quad\Delta(u)\leq\Delta(v).
\]
\end{theorem}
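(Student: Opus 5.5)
We define $\Delta$ directly from Theorem~\ref{thm:faces}. For $u\in{G\mathcal{U}}(X)$, the face $F_u$ is symmetric, so by Theorem~\ref{thm:faces} the image $S(F_u)$ is a symmetric face of $Y_1$. By \cite[Proposition~1.6]{FR1989Affine} there is a unique geometric tripotent, which we call $\Delta(u)$, with $S(F_u)=F_{\Delta(u)}$. Uniqueness of $\Delta$ is therefore built in: any map with $S(F_u)=F_{\Delta(u)}$ must pick this tripotent. For $u\in{G\mathcal{U}}_{\max}(X)$ the defining equality coincides with that of $\Phi$ in Corollary~\ref{cor:maxbijection}, so $\left.\Delta\right|_{{G\mathcal{U}}_{\max}(X)}=\Phi$.

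For bijectivity, we argue as in Corollary~\ref{cor:maxbijection}. If $\Delta(u)=\Delta(v)$, then $S(F_u)=S(F_v)$. Injectivity of $S$ gives $F_u=F_v$, and \cite[Proposition~1.6]{FR1989Affine} then gives $u=v$. For surjectivity, take $w\in{G\mathcal{U}}(Y)$. We apply Theorem~\ref{thm:faces} to the surjective isometry $S^{-1}:\partial Y_1\to\partial X_1$; this is legitimate because the hypotheses are symmetric in $X$ and $Y$. We obtain $S^{-1}(F_w)=F_u$ for some $u\in{G\mathcal{U}}(X)$. Hence $S(F_u)=F_w$, so $\Delta(u)=w$.

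Order preservation follows from the definition $u\leq v\Leftrightarrow F_u\subseteq F_v$. Since $S$ is a bijection of spheres and faces lie in the spheres, $F_u\subseteq F_v$ holds iff $S(F_u)\subseteq S(F_v)$, which is iff $F_{\Delta(u)}\subseteq F_{\Delta(v)}$, which is iff $\Delta(u)\leq\Delta(v)$.

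I expect no serious obstacle, since all the substance is in Theorem~\ref{thm:faces}. The only points needing care are these:
\begin{itemize}
\item Proposition~1.6 of \cite{FR1989Affine} gives a bijective correspondence between symmetric faces and geometric tripotents in SFS-spaces. This is used in both directions.
\item Symmetric faces are nonempty subsets of the unit sphere, so $S$ is defined on them.
\end{itemize}
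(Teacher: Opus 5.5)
Your proposal is correct and follows the paper's proof step for step. You define $\Delta$ via Theorem~\ref{thm:faces} and \cite[Proposition~1.6]{FR1989Affine}, get injectivity from injectivity of $S$, get surjectivity by applying Theorem~\ref{thm:faces} to $S^{-1}$, and get order preservation from $F_u\subseteq F_v\Leftrightarrow S(F_u)\subseteq S(F_v)$; you also make explicit the uniqueness of $\Delta$ and the identity $\left.\Delta\right|_{{G\mathcal{U}}_{\max}(X)}=\Phi$, which the paper leaves implicit.
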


\begin{proof}
By Theorem~\ref{thm:faces}, for every $u\in{G\mathcal{U}}(X)$, the set $S(F_u)$
is a symmetric face of $Y_1$. Thus, by
\cite[Proposition~1.6]{FR1989Affine}, there exists a unique
$v\in{G\mathcal{U}}(Y)$ such that $S(F_u)=F_v$. Putting $\Delta(u)=v$, we
define the map
\[
 \Delta:{G\mathcal{U}}(X)\longrightarrow{G\mathcal{U}}(Y),\qquad S(F_u)=F_{\Delta(u)}.
\]
If $\Delta(u_1)=\Delta(u_2)$, then
\[
 S(F_{u_1})=F_{\Delta(u_1)}=F_{\Delta(u_2)}=S(F_{u_2}).
\]
Since $S$ is injective, $F_{u_1}=F_{u_2}$. By
\cite[Proposition~1.6]{FR1989Affine}, $u_1=u_2$. Thus $\Delta$ is
injective.

Now let $v\in{G\mathcal{U}}(Y)$. Applying Theorem~\ref{thm:faces} to the
surjective isometry $S^{-1}$, we conclude that $S^{-1}(F_v)$ is a
symmetric face of $X_1$. Hence $S^{-1}(F_v)=F_u$ for some
$u\in{G\mathcal{U}}(X)$. Consequently,
\[
 F_v=S(F_u)=F_{\Delta(u)},
\]
so $v=\Delta(u)$. Thus $\Delta$ is surjective.

Finally,
\begin{align*}
 u\leq v
 &\Longleftrightarrow F_u\subseteq F_v
 \Longleftrightarrow S(F_u)\subseteq S(F_v)\\
 &\Longleftrightarrow F_{\Delta(u)}\subseteq F_{\Delta(v)}
 \Longleftrightarrow\Delta(u)\leq\Delta(v).
\end{align*}
\end{proof}

Under the assumptions and notation of Theorem~\ref{thm:tripotents},
the following assertions hold.

\begin{corollary}\label{cor:invariants}

{\rm (i)} For every $u\in{G\mathcal{U}}(X)$, we have
 \[
  S(F_{-u})=-S(F_u),\qquad\Delta(-u)=-\Delta(u).
 \]
\begin{enumerate}
    \item[{\rm (ii)}] If $x$ is a norm-exposed point of the unit ball $X_1$, then
 \[
  S(-x)=-S(x).
 \]

\item[{\rm (iii})] If $X$ has property $\mathrm{PE}$, then $S(-x)=-S(x)$ for every
 $x\in\operatorname{ext} X_1$.
 
 \item[{\rm (iv)}] For every $u\in{G\mathcal{U}}(X)$, the restriction
 \[
  \left.S\right|_{\partial(X_0(u))_1}:
  \partial(X_0(u))_1\longrightarrow\partial(Y_0(\Delta(u)))_1
 \]
 is a surjective isometry.
 
\item[{\rm (v)}] For all $u,v\in{G\mathcal{U}}(X)$, we have
 \[
  u\mathbin{\diamond} v\quad\Longleftrightarrow\quad\Delta(u)\mathbin{\diamond}\Delta(v),
 \]
 and
 \[
  u\mathbin{\diamond} v\quad\Longrightarrow\quad
  \Delta(u+v)=\Delta(u)+\Delta(v).
 \]
 
 \item[{\rm (vi)}] For every $w\in{G\mathcal{U}}(X)$, the map
 \[
  \Delta:L_w\longrightarrow L_{\Delta(w)}
 \]
 preserves order and orthocomplementation, where
 \[
  L_w=\{u\in{G\mathcal{U}}:u\leq w\}\cup\{0\},\qquad\Delta(0)=0.
 \]
\end{enumerate}
\end{corollary}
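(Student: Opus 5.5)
I would prove the six items in order, each reducing to Theorem~\ref{thm:tripotents}, Proposition~\ref{prop:orthogonality} and the facial facts of \cite{FR1986,FR1989Affine}.

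\textbf{Item (i): opposite faces.} The key is to describe $F_{-u}$ metrically in terms of $F_u$. For $f\in F_u$ and $g\in\partial X_1$, $\left\lVert f-g\right\rVert\le 2$, with equality when $g\in F_{-u}$ since $u(f-g)=2$. I would aim to show that $F_{-u}$ is exactly the set of $g\in\partial X_1$ with $\left\lVert f-g\right\rVert=2$ for all $f\in F_u$. The plan is: if $\left\lVert f-g\right\rVert=2$ for every $f\in F_u$, take $f$ in the relative interior of $F_u$ (or use $F_u$ convex together with the maximal convex face argument of Proposition~\ref{prop:orthogonality}(iii)) to get that $g$ lies in a face opposite to $F_u$. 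Then use the symmetric-face/Peirce structure, namely $-g\in F_u$ via \cite[Lemma~2.7]{FR1986}-type arguments.

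This characterization is the main obstacle. If it fails in general, I would fall back on maximal tripotents: by Proposition~\ref{prop:intersection}, $F_{-u}=F_{-e_1}\cap F_{-e_2}$, so it suffices to treat maximal $e$. By \cite[Lemma~3.5]{Tanaka2014} and \cite{Campos2021}, for maximal convex sets one has $S(-C)=-S(C)$, so $S(F_{-e})=-S(F_e)=F_{-\Phi(e)}$. Intersecting gives $S(F_{-u})=-S(F_u)=F_{-\Delta(u)}$, and uniqueness in Theorem~\ref{thm:tripotents} yields $\Delta(-u)=-\Delta(u)$.

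\textbf{Items (ii) and (iii): oddness at exposed and extreme points.} In an SFS-space a norm-exposed point $x$ gives $\{x\}=F_u$ with $u=v(x)$ a (minimal) geometric tripotent. By (i), $\{S(-x)\}=S(F_{-u})=-S(F_u)=\{-S(x)\}$. Item (iii) is then immediate from property PE, since every extreme point is norm exposed.

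\textbf{Item (iv): Peirce-zero spheres.} In an SFS-space, $\partial(X_0(u))_1=F_u^\diamond\cap\partial X_1$. Proposition~\ref{prop:orthogonality}(ii) gives
\[
S(F_u^\diamond\cap\partial X_1)=S(F_u)^\diamond\cap\partial Y_1=F_{\Delta(u)}^\diamond\cap\partial Y_1=\partial(Y_0(\Delta(u)))_1 .
\]
The restriction is therefore a bijection between these spheres, and it is isometric because $S$ is.

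\textbf{Item (v): orthogonality and sums.} By \eqref{eq:supportorth} and \cite[Lemma~2.5]{FR1986}, $u\mathbin{\diamond} v$ if and only if $F_u\mathbin{\diamond} F_v$. Proposition~\ref{prop:orthogonality}(i) transports this to $F_{\Delta(u)}\mathbin{\diamond} F_{\Delta(v)}$, hence to $\Delta(u)\mathbin{\diamond}\Delta(v)$; the converse uses $S^{-1}$. For the sum, $F_{u+v}$ is the set $\{tf+(1-t)g: f\in F_u,\ g\in F_v,\ t\in[0,1]\}$. By Theorem~\ref{thm:tripotents}, $\Delta(u+v)\ge\Delta(u),\Delta(v)$, and $\Delta(u)+\Delta(v)$ is a tripotent above both. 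I would show $\Delta(u+v)-\Delta(u)$ is orthogonal to $\Delta(u)$ by \cite[Lemma~4.2]{FR1989Affine}. Then I would prove $F_{\Delta(u+v)}$ is the smallest symmetric face containing $F_{\Delta(u)}\cup F_{\Delta(v)}$, because $F_{u+v}$ is the smallest such face and $\Delta$ is an order isomorphism. This forces $\Delta(u+v)=\Delta(u)+\Delta(v)$.

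\textbf{Item (vi): orthocomplementation in $L_w$.} Order preservation follows from Theorem~\ref{thm:tripotents}. The complement of $u$ in $L_w$ is $w-u$, by \cite[Lemma~4.2]{FR1989Affine}. Applying (v) to $u\mathbin{\diamond}(w-u)$ gives $\Delta(w)=\Delta(u)+\Delta(w-u)$, so $\Delta(w-u)=\Delta(w)-\Delta(u)$.
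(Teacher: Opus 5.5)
\textbf{Where you match the paper.} Items (ii), (iii), (iv), (vi), the orthogonality equivalence in (v), and your \emph{fallback} for (i) are the paper's argument. The paper takes $S(-C)=-S(C)$ for maximal convex $C$ from \cite[Proposition~2.3]{Mori2018}, then intersects using Proposition~\ref{prop:intersection} and the injectivity of $S$.

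\textbf{Your primary plan for (i) fails.} The fallback is not optional, because the metric characterization of $F_{-u}$ is false for non-maximal faces. In real $\ell_1^2$ take $u=(1,0)\in\ell_\infty^2$, so $F_u=\{e_1\}$ and $F_{-u}=\{-e_1\}$. For $g\in\partial X_1$ one has $\lVert e_1-g\rVert_1=2-g_1-|g_1|$. Hence every $g$ on the sphere with $g_1\le 0$ is at distance $2$ from $F_u$, which is two whole edges. The description of $-C$ as the points at distance $2$ from $C$ holds only for maximal convex $C$, and that is exactly why the reduction to maximal tripotents is needed.

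A minor point: \eqref{eq:supportorth} and the notation $v(x)$ are set up only for neutral spaces. Here \cite[Lemma~2.5]{FR1986} and \cite[Proposition~1.6]{FR1989Affine} suffice.

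\textbf{The genuine gap is in the additivity part of (v).} Your description $F_{u+v}=\operatorname{co}(F_u\cup F_v)$ is false in general. In the predual of $M_2(\mathbb{C})$, with $u=e_{11}$ and $v=e_{22}$, the faces $F_u$ and $F_v$ are the two diagonal pure states. But $F_{u+v}=F_1$ is the whole state space, and it contains the state with density matrix $\frac12\left(\begin{smallmatrix}1&1\\1&1\end{smallmatrix}\right)$. So nothing in your proposal justifies that $F_{u+v}$ is the smallest symmetric face containing $F_u\cup F_v$.

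What is needed is the lattice fact $u\mathbin{\diamond} v\Rightarrow u+v=u\vee v$ in $({G\mathcal{U}},\le)$: every common upper bound $w$ of $u$ and $v$ satisfies $u+v\le w$. The paper imports this from \cite[Lemma~3.2(b)(iii)]{Ibragimov2024}. It can also be checked directly. Since $u$ vanishes on $F_v\subseteq X_0(u)$, you get $F_v\subseteq F_{w-u}$, i.e.\ $v\le w-u$. Then \cite[Lemma~4.2]{FR1989Affine}, \cite[Lemma~1.8]{FR1989Affine} and \cite[Lemma~2.5]{FR1986} show that $w-u-v$ is zero or orthogonal to $u+v$.

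You need this fact twice. In $X$ it applies to $u,v$. In $Y$ it applies to $\Delta(u),\Delta(v)$, which is legitimate once the first half of (v) gives $\Delta(u)\mathbin{\diamond}\Delta(v)$. Knowing only that $\Delta(u)+\Delta(v)$ is \emph{some} upper bound of $\Delta(u),\Delta(v)$ does not identify it with $\Delta(u+v)$. With the lemma in both spaces, $\Delta(u+v)=\Delta(u)\vee\Delta(v)=\Delta(u)+\Delta(v)$ follows at once from $\Delta$ being an order isomorphism. Your detour through $\Delta(u+v)-\Delta(u)$ is then unnecessary, and (vi) follows as you wrote it.
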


\begin{proof}
\textup{(i)} Let $u\in{G\mathcal{U}}(X)$. By
Proposition~\ref{prop:intersection}, there exist
$e_1,e_2\in{G\mathcal{U}}_{\max}(X)$ such that
\[
 F_u=F_{e_1}\cap F_{e_2}.
\]
By Lemma~\ref{lem:maximal}, $F_{e_1}$ and $F_{e_2}$ are maximal
convex subsets of $\partial X_1$. Hence
\cite[Proposition~2.3]{Mori2018} gives
\[
 S(F_{-e_i})=-S(F_{e_i}),\qquad i=1,2.
\]
Using $F_{-u}=-F_u$ and the injectivity of $S$, we obtain
\begin{align*}
 S(F_{-u})
 &=S\bigl((-F_{e_1})\cap(-F_{e_2})\bigr)\\
 &=S(-F_{e_1})\cap S(-F_{e_2})\\
 &=(-S(F_{e_1}))\cap(-S(F_{e_2}))\\
 &=-\bigl(S(F_{e_1})\cap S(F_{e_2})\bigr)=-S(F_u).
\end{align*}
By the definition of $\Delta$,
\[
 F_{\Delta(-u)}=S(F_{-u})=-S(F_u)=-F_{\Delta(u)}=F_{-\Delta(u)}.
\]
The uniqueness of the geometric tripotent associated with a symmetric
face (see \cite[Proposition~1.6]{FR1989Affine}) implies
$\Delta(-u)=-\Delta(u)$.

\textup{(ii)} Since $x$ is norm exposed and $X$ is an $\mathrm{SFS}$-space,
the singleton face $\{x\}$ is symmetric. Hence there exists
$u\in{G\mathcal{U}}(X)$ such that $\{x\}=F_u$. By \textup{(i)},
\[
 \{S(-x)\}=S(F_{-u})=-S(F_u)=\{-S(x)\}.
\]
Therefore $S(-x)=-S(x)$.

\textup{(iii)} Under property $\mathrm{PE}$, every $x\in\operatorname{ext} X_1$ is norm
exposed, so \textup{(ii)} gives $S(-x)=-S(x)$.

\textup{(iv)} By Proposition~\ref{prop:orthogonality}\textup{(ii)},
\begin{align*}
 S(\partial X_1\cap X_0(u))
 &=S(\partial X_1\cap F_u^\diamond)\\
 &=\partial Y_1\cap S(F_u)^\diamond\\
 &=\partial Y_1\cap F_{\Delta(u)}^\diamond
 =\partial Y_1\cap Y_0(\Delta(u)).
\end{align*}

\textup{(v)} By \cite[Lemma~2.5]{FR1986}, $u\mathbin{\diamond} v$ if and only if
$F_u\mathbin{\diamond} F_v$. Thus
\begin{align*}
 u\mathbin{\diamond} v
 &\Longleftrightarrow F_u\mathbin{\diamond} F_v
 \Longleftrightarrow S(F_u)\mathbin{\diamond} S(F_v)\\
 &\Longleftrightarrow F_{\Delta(u)}\mathbin{\diamond} F_{\Delta(v)}
 \Longleftrightarrow\Delta(u)\mathbin{\diamond}\Delta(v).
\end{align*}
Since $\Delta$ is an order isomorphism and, by
\cite[Lemma~3.2(b)(iii)]{Ibragimov2024}, $u\mathbin{\diamond} v$ implies
$u+v=u\vee v$, we have
\[
 \Delta(u+v)=\Delta(u\vee v)
 =\Delta(u)\vee\Delta(v)=\Delta(u)+\Delta(v)
 \qquad(u\mathbin{\diamond} v).
\]

\textup{(vi)} Order preservation was established in
Theorem~\ref{thm:tripotents}. To prove preservation of
orthocomplementation, let $u\in L_w$. Then $u\mathbin{\diamond}(w-u)$, and hence
\[
 \Delta(w)=\Delta(w-u+u)=\Delta(w-u)+\Delta(u).
\]
Thus $\Delta(w-u)=\Delta(w)-\Delta(u)$.
\end{proof}

In \cite{Wang2002}, Tingley's problem was solved for $\ell_1$-sums of
strictly convex normed spaces by describing surjective isometries between
their unit spheres. Every real Hilbert space is a strictly convex
$\mathrm{SFS}$-space. Thus, by \cite[Theorem~3.1]{Ibragimov2018}, the spaces
\[
 X=\left(\bigoplus_{i\in I}H_i\right)_{\ell_1},
 \qquad Y=\left(\bigoplus_{j\in J}K_j\right)_{\ell_1},
\]
where $H_i$ and $K_j$ are nonzero real Hilbert spaces, are also
$\mathrm{SFS}$-spaces. Consequently, the result of \cite{Wang2002} gives a
positive solution to Tingley's problem for this class of $\mathrm{SFS}$-spaces.

The following structural theorem was established in
\cite{Kudaybergenov2020}.

\begin{theorem}[{\cite[Theorem~3.1]{Kudaybergenov2020}}]\label{thm:classification}
Let $Z$ be a finite-dimensional real neutral $\mathrm{SFS}$-space with property
$\mathrm{JP}$. Then
\[
 Z\cong\bigoplus_{i=1}^k{}_{\ell_1} H_i,
\]
where $H_i$, $i=1,\ldots,k$, are nonzero finite-dimensional real Hilbert
spaces, $k=\operatorname{rank} Z$, and the isomorphism is linear and isometric.
\end{theorem}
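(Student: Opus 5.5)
The plan is to prove the statement by decomposing $Z$ into mutually orthogonal pieces of rank one, and then identifying each rank-one piece as a Hilbert space.

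\emph{Step 1: minimal tripotents and finiteness of rank.} Since $\dim Z<\infty$, pairwise orthogonal elements of $\partial Z_1$ are linearly independent: by the norm identity $\bigl\lVert\sum\alpha_if_i\bigr\rVert=\sum|\alpha_i|$ used in Corollary~\ref{cor:orthspan}. By \eqref{eq:supportorth}, this gives $\operatorname{rank}Z\le\dim Z$. By finite dimensionality and \cite[Lemma~4.2]{FR1989Affine}, every geometric tripotent dominates a minimal one, and every $u\in{G\mathcal{U}}$ is a finite orthogonal sum of minimal tripotents. In particular a maximal tripotent splits as $e=u_1+\dots+u_m$ with the $u_i\in\mathcal{M}$ pairwise orthogonal.

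\emph{Step 2: an equivalence relation on $\mathcal{M}$.} Declare $u\sim v$ for $u,v\in\mathcal{M}$ if $u$ and $v$ are \emph{not} orthogonal. The crucial claim is that this relation is transitive. Equivalently: if $u\not\mathbin{\diamond} v$ and $u\mathbin{\diamond} w$, then $v\mathbin{\diamond} w$.

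I would derive this from property $\mathrm{JP}$ and neutrality, as follows.
\begin{enumerate}
\item[(a)] Since $S_uS_w=S_{u+w}$, the Peirce projections of $u$ and $w$ commute. Hence $Z$ splits into joint Peirce spaces $Z_{ij}=Z_i(u)\cap Z_j(w)$.
\item[(b)] A minimal $v$ with $v\not\mathbin{\diamond} u$ has its one-dimensional face $F_v=\{f_v\}$ with a nonzero component in $Z_2(u)\oplus Z_1(u)$.
\item[(c)] In a \emph{real} neutral space, the joint component $Z_1(u)\cap Z_1(w)$ should vanish for minimal $u,w$. This would be shown by checking that any nonzero element there, paired with $Z_2(u)$ and $Z_2(w)$, produces a norm-exposed face not fixed by the relevant symmetry, contradicting the $\mathrm{SFS}$ condition.
\item[(d)] From (c) one deduces $f_v\in Z_0(w)$, i.e.\ $v\mathbin{\diamond} w$.
\end{enumerate}
This transitivity step is the main obstacle. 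It is exactly where both the real scalars and $\mathrm{JP}$ must enter, since it fails for complex spin factors and for non-$\mathrm{JP}$ examples. If the direct argument stalls, I would first work it out for two minimal tripotents in the rank-$2$ Peirce space $Z_2(u+w)$.

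\emph{Step 3: splitting $Z$.} Let $\mathcal{M}_1,\dots,\mathcal{M}_k$ be the equivalence classes of $\sim$, and let $H_i$ be the linear span of $\bigcup_{u\in\mathcal{M}_i}F_u$.
\begin{itemize}
\item Elements from different classes are orthogonal, so $H_i\mathbin{\diamond} H_j$ for $i\neq j$. Then \cite[Corollary~1.2]{FR1989Affine} gives $\bigl\lVert\sum h_i\bigr\rVert=\sum\lVert h_i\rVert$.
\item Every $f\in Z$ satisfies $f=\lVert f\rVert\,$(a point of $F_{v(f)}$), and $v(f)$ is a sum of minimal tripotents. Hence $Z=\sum H_i$, and this sum is an $\ell_1$-sum.
\item Picking one minimal tripotent from each class gives a maximal orthogonal family, so $k=\operatorname{rank}Z$.
\end{itemize}

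\emph{Step 4: each $H_i$ is a Hilbert space.} Each $H_i$ is again a real neutral $\mathrm{SFS}$-space, being a Peirce-$2$ space of the tripotent $\sum$ over the class, and it has rank one. In rank one every geometric tripotent is minimal, so every norm-exposed face is a single point $\{x\}$. The associated symmetry $S_x$ is then a linear isometric reflection fixing $\mathbb{R}x$, with $(-1)$-eigenspace a complementary hyperplane.

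A finite-dimensional real normed space in which every exposed point (hence, by density, every direction) admits an isometric reflection of this type has a transitive isometry group on the sphere. By the classical characterization of inner product spaces via such reflections, the norm is Euclidean. I would apply this criterion by averaging: the compact isometry group preserves an inner product, and transitivity on the sphere forces the unit sphere to coincide with that inner product's sphere. This would complete the proof.
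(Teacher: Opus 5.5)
The paper does not prove this statement; it imports it from \cite{Kudaybergenov2020}. Your argument therefore has to stand on its own, and it has a genuine gap in Step~2, which is where the content of the theorem lies. Item~(c), that $Z_1(u)\cap Z_1(w)=\{0\}$ for orthogonal minimal $u,w$, is only asserted (``should vanish''). The suggested mechanism names neither the face nor the symmetry involved, and it does not show where real scalars are used. That matters, because, as you note yourself, the conclusion fails for complex spin factors, which are neutral $\mathrm{SFS}$-spaces with $\mathrm{JP}$.

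Here is one identity you could exploit. If $u\mathbin{\diamond} w$ then $u\mathbin{\diamond}(-w)$, and $S_{-w}=S_w$, since $F_{-w}=-F_w$ has the same span and orthogonal complement. So $\mathrm{JP}$ gives $S_{u+w}=S_uS_w=S_{u-w}$. With the joint Peirce decomposition from (a), this yields $Z_1(u)\cap Z_1(w)\subseteq Z_2(u+w)\cap Z_2(u-w)$, so both real spans $\overline{\operatorname{sp}}\,F_{u+w}$ and $\overline{\operatorname{sp}}\,F_{u-w}$ must contain the intersection. Over $\mathbb{C}$ this is harmless, because both complex spans are the full Peirce-$2$ space; over $\mathbb{R}$ it is a genuine constraint. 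Turning it into $Z_1(u)\cap Z_1(w)=\{0\}$ is the actual work, and that argument is missing.

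Even granting (c), step (d) does not follow. From (c) you get $Z_2(w)\oplus Z_1(w)\subseteq Z_0(u)$. Write $f_v=y+z$ with $y\in Z_2(w)\oplus Z_1(w)$ and $z\in Z_0(w)$. The hypothesis that $v$ and $u$ are not orthogonal then only says $z\notin Z_0(u)$; it says nothing about $y$. To get $y=0$ you would need something like an $\ell_1$-splitting $Z=(Z_2(w)\oplus Z_1(w))\oplus_{\ell_1}Z_0(w)$ together with extremality of $f_v$, which is the structure you are trying to prove.

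The same missing input is used silently later:
\begin{itemize}
\item In Step~3, $Z=\sum_i H_i$ needs $Z_2(u_1+\dots+u_m)=\bigoplus_j Z_2(u_j)$, which again rests on (c) and $\mathrm{JP}$.
\item In Step~4, the ``tripotent $\sum$ over the class'' does not exist, because members of one class are pairwise non-orthogonal. The right candidate is $Z_0\bigl(\sum_{j\neq i}u_j\bigr)$ for representatives $u_j$ of the other classes. Identifying it with $H_i$, so that it inherits neutrality, the $\mathrm{SFS}$ property and $\mathrm{JP}$ and has rank one, again requires Step~2.
\item Step~1's claim that every tripotent dominates one with $\dim Z_2=1$ needs an argument that order-minimal tripotents have singleton faces; Lemma~4.2 of \cite{FR1989Affine} alone does not give this.
\end{itemize}

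Your Hilbert-space argument in Step~4 is sound once rank one is established, provided you derive transitivity after averaging rather than before. For the invariant inner product, each $S_x$ becomes an orthogonal reflection, and reflections along all lines act transitively on its unit sphere.
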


This theorem and the result of \cite{Wang2002} yield the following
assertion.

\begin{corollary}\label{cor:finiteextension}
Let $X$ and $Y$ be finite-dimensional real neutral $\mathrm{SFS}$-spaces with
property $\mathrm{JP}$. Then every surjective isometry
\[
 S:\partial X_1\longrightarrow\partial Y_1
\]
extends uniquely to a surjective linear isometry $T:X\to Y$.
\end{corollary}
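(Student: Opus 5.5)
The plan is to reduce everything to the concrete $\ell_1$-sum model and then invoke \cite{Wang2002}. By Theorem~\ref{thm:classification}, there are surjective linear isometries
\[
 J_X:X\to\left(\bigoplus_{i=1}^k H_i\right)_{\ell_1},
 \qquad
 J_Y:Y\to\left(\bigoplus_{j=1}^m K_j\right)_{\ell_1},
\]
where $H_i$ and $K_j$ are nonzero finite-dimensional real Hilbert spaces, $k=\operatorname{rank} X$ and $m=\operatorname{rank} Y$. By Corollary~\ref{cor:rank} we have $k=m$, although this is not strictly needed for the argument.

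A surjective linear isometry maps the unit sphere onto the unit sphere. Hence
\[
 \widetilde S=J_Y\circ S\circ J_X^{-1}
\]
is a surjective isometry between the unit spheres of the two $\ell_1$-sums. Every finite-dimensional real Hilbert space is strictly convex, so the main theorem of \cite{Wang2002} on $\ell_1$-sums of strictly convex spaces applies. It says that $\widetilde S$ extends to a surjective real-linear isometry $\widetilde T$ between the $\ell_1$-sums. Put
\[
 T=J_Y^{-1}\circ\widetilde T\circ J_X .
\]
Then $T:X\to Y$ is a surjective linear isometry, and $T|_{\partial X_1}=S$.

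Uniqueness is routine. Every $x\neq0$ can be written as $x=\left\lVert x\right\rVert\,(x/\left\lVert x\right\rVert)$, so real-homogeneity forces $T(x)=\left\lVert x\right\rVert\,S(x/\left\lVert x\right\rVert)$. Thus any two linear extensions agree on $X$.

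The main obstacle is checking that the hypotheses of \cite{Wang2002} are met exactly as that paper states them. There are three points to confirm. First, the result should cover $\ell_1$-sums with possibly different index sets and possibly different summands on the two sides. Second, it should allow one-dimensional summands $H_i\cong\mathbb{R}$, which are strictly convex in the trivial sense. Third, it should be formulated for real spaces. If \cite{Wang2002} instead assumes both sides are the same sum, or assumes $\dim H_i\ge2$, I would first match the summands. For this I would use Corollary~\ref{cor:invariants}(iv) together with the fact that the Peirce-zero spaces of the tripotents built from the summands are sub-$\ell_1$-sums. This lets $S$ permute the summands and preserve their dimensions, after which the cited result applies summandwise.
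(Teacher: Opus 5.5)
Your proposal is correct and follows the same route as the paper: conjugate $S$ by the linear isometries from Theorem~\ref{thm:classification} onto finite $\ell_1$-sums of real Hilbert spaces, apply \cite{Wang2002} to get a surjective linear extension, pull it back, and get uniqueness from homogeneity. Your contingency step of matching summands is not needed, since the paper applies \cite{Wang2002} directly to $\ell_1$-sums of arbitrary, possibly different, families of nonzero real Hilbert spaces.
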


\begin{theorem}\label{thm:extension}
Let $X$ and $Y$ be real Banach neutral $\mathrm{SFS}$-spaces with property
$\mathrm{JP}$. Suppose that there exists a family
$\{u_\alpha\}_{\alpha\in I}\subset{G\mathcal{U}}(X)$ satisfying the following
conditions:
\begin{enumerate}[label=\textup{(\arabic*)}]
 \item[{\rm(1)}] $X_0(u_\alpha)$ is finite dimensional for every $\alpha\in I$;
 \item[{\rm(2)}] for every $\alpha,\beta\in I$, there exists $\gamma\in I$ such
 that
 \[
  X_0(u_\alpha)\cup X_0(u_\beta)\subseteq X_0(u_\gamma);
 \]
 \item[{\rm(3)}]
 \[
  \overline{\bigcup_{\alpha\in I}X_0(u_\alpha)}=X,
 \]
 where the closure is taken in norm.
\end{enumerate}
Then every surjective isometry $S:\partial X_1\to\partial Y_1$
extends uniquely to a surjective linear isometry $T:X\to Y$.
\end{theorem}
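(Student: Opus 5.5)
The plan is to build $T$ locally on each finite-dimensional Peirce-zero subspace and then glue. Fix $\alpha\in I$ and put $X_\alpha=X_0(u_\alpha)$, $Y_\alpha=Y_0(\Delta(u_\alpha))$. By Corollary~\ref{cor:invariants}(iv), the restriction $S_\alpha=\left.S\right|_{\partial(X_\alpha)_1}$ is a surjective isometry onto $\partial(Y_\alpha)_1$.

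To apply Corollary~\ref{cor:finiteextension}, both $X_\alpha$ and $Y_\alpha$ must be finite-dimensional real neutral SFS-spaces with property JP.
\begin{itemize}
\item \emph{Inheritance of structure.} I will cite the standard fact from the Friedman--Russo theory that Peirce-zero subspaces of a neutral SFS-space are again neutral SFS-spaces. The symmetries $S_v$ with $v\in{G\mathcal{U}}(X_\alpha)$ come from tripotents $v\diamond u_\alpha$ of $X$ and leave $X_\alpha$ invariant. Property JP is then inherited by restricting $S_uS_v=S_{u+v}$.
\item \emph{Finite dimensionality of $Y_\alpha$.} Since $Y_\alpha$ is a closed subspace, its unit sphere is isometric to that of $X_\alpha$, which is compact. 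Hence the unit ball of $Y_\alpha$ is compact and $\dim Y_\alpha<\infty$.
\end{itemize}
Corollary~\ref{cor:finiteextension} then gives a unique surjective linear isometry $T_\alpha:X_\alpha\to Y_\alpha$ extending $S_\alpha$.

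Next comes compatibility. Given $\alpha,\beta$, choose $\gamma$ as in condition (2). Then $T_\gamma$ restricted to $X_\alpha$ is linear and agrees with $S$ on $\partial(X_\alpha)_1$. Since $T_\alpha$ also agrees with $S$ there and both are linear, positive homogeneity gives $T_\gamma|_{X_\alpha}=T_\alpha$. So the maps $T_\alpha$ agree on overlaps. Define $T_0$ on $D=\bigcup_\alpha X_\alpha$ by $T_0f=T_\alpha f$ for $f\in X_\alpha$. By directedness $D$ is a linear subspace, $T_0$ is a linear isometry, and $T_0f=S(f)$ for every $f\in D$ with $\|f\|=1$.

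By condition (3), $T_0$ extends uniquely by continuity to a linear isometry $T:X\to Y$. To see that $T$ extends $S$, take $x\in\partial X_1$ and $f_n\in D$ with $f_n\to x$. Normalize to $g_n=f_n/\|f_n\|\to x$. Then $Tg_n=S(g_n)\to S(x)$, because $S$ is an isometry and hence continuous, while $Tg_n\to Tx$. So $Tx=S(x)$.

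Surjectivity follows because $T(X)$ is a closed subspace containing $S(\partial X_1)=\partial Y_1$, hence equals $Y$. Uniqueness is immediate, since a linear map is determined by its values on the sphere.

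The main obstacle I expect is the inheritance step: verifying that $Y_0(\Delta(u_\alpha))$ and $X_0(u_\alpha)$ are themselves real neutral SFS-spaces with JP. If a direct reference is unavailable, I would argue as follows. Symmetric faces of $(X_\alpha)_1$ are exactly the $F_v$ with $v\diamond u_\alpha$. The restricted symmetries $S_v|_{X_\alpha}$ have the right fixed-point space, because $F_v^\diamond\cap X_\alpha$ is the relative orthogonal complement. Neutrality and the SFS condition then pass to the subspace via Hahn--Banach extension of functionals, using $P_0(u_\alpha)^*$.
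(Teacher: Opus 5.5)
Your proposal is correct and follows essentially the same route as the paper: restrict $S$ to the Peirce-zero spheres via Corollary~\ref{cor:invariants}(iv), deduce finite dimensionality of $Y_0(\Delta(u_\alpha))$ from compactness of its sphere, invoke inheritance of the neutral $\mathrm{SFS}$ structure and of property $\mathrm{JP}$ (the paper cites Friedman--Russo and Kudaybergenov et al.\ for this, just as you intend to), apply Corollary~\ref{cor:finiteextension}, glue the local extensions via directedness and homogeneity, and extend by density. Your surjectivity argument (a closed subspace containing $\partial Y_1$) is only a repackaging of the paper's scaling argument.
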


\begin{proof}
By Corollary~\ref{cor:invariants}\textup{(iv)}, the maps
\[
 S_\alpha:\partial(X_0(u_\alpha))_1
 \longrightarrow\partial(Y_0(\Delta(u_\alpha)))_1,
 \qquad S_\alpha:=\left.S\right|_{\partial(X_0(u_\alpha))_1},
\]
are surjective isometries. Thus the sphere
$\partial(Y_0(\Delta(u_\alpha)))_1$ is compact, and therefore
$Y_0(\Delta(u_\alpha))$ is finite dimensional. By
\cite[Theorem~3.6 and Proposition~4.1]{FR1989Affine}, the spaces
$X_0(u_\alpha)$ and $Y_0(\Delta(u_\alpha))$ are neutral
$\mathrm{SFS}$-spaces. The fact that these subspaces inherit property $\mathrm{JP}$
follows from \cite[Lemma~3.5]{Kudaybergenov2020}. Hence
Corollary~\ref{cor:finiteextension} provides a linear isometric
extension
\[
 T_\alpha:X_0(u_\alpha)\longrightarrow Y_0(\Delta(u_\alpha)).
\]
For $x\in X_0(u_\alpha)\setminus\{0\}$,
\begin{equation}\tag{3.5}\label{eq:radial}
 T_\alpha(x)
 =\left\lVert x\right\rVert \,T_\alpha\left(\frac{x}{\left\lVert x\right\rVert }\right)
 =\left\lVert x\right\rVert \,S_\alpha\left(\frac{x}{\left\lVert x\right\rVert }\right)
 =\left\lVert x\right\rVert \,S\left(\frac{x}{\left\lVert x\right\rVert }\right).
\end{equation}
Condition \textup{(2)} implies that the set
\[
 E=\bigcup_{\alpha\in I}X_0(u_\alpha)
\]
is a subspace. Indeed, let $x_1,x_2\in E$. Then
$x_1\in X_0(u_\alpha)$ and $x_2\in X_0(u_\beta)$ for some
$\alpha,\beta\in I$. Since
$X_0(u_\alpha)\cup X_0(u_\beta)\subseteq X_0(u_\gamma)$ for some
$\gamma\in I$, and $X_0(u_\gamma)$ is a linear subspace,
\[
 ax_1+bx_2\in X_0(u_\gamma)\subseteq E
 \qquad(a,b\in\mathbb{R}).
\]

Let $T$ denote the map defined by the radial formula
\eqref{eq:radial}:
\[
 T(0)=0,\qquad T(x)=\left\lVert x\right\rVert \,S\left(\frac{x}{\left\lVert x\right\rVert }\right)
 \quad(x\in E\setminus\{0\}).
\]
Clearly, $\left.T\right|_{X_0(u_\alpha)}=T_\alpha$, where $T_\alpha$
is a linear isometry. Let $x_1,x_2\in E$ and $a,b\in\mathbb{R}$. Choose
$\gamma\in I$ such that $x_1,x_2\in X_0(u_\gamma)$. Then
$ax_1+bx_2\in X_0(u_\gamma)\subseteq E$, and
\begin{align*}
 T(ax_1+bx_2)
 &=T_\gamma(ax_1+bx_2)\\
 &=aT_\gamma(x_1)+bT_\gamma(x_2)
 =aT(x_1)+bT(x_2).
\end{align*}
Moreover,
\[
 \left\lVert T(x_1)-T(x_2)\right\rVert
 =\left\lVert T_\gamma(x_1)-T_\gamma(x_2)\right\rVert =\left\lVert x_1-x_2\right\rVert .
\]
Thus $T:E\to Y$ is a linear isometry.

Since $E$ is dense in $X$ and $Y$ is complete, $T:E\to Y$ extends
uniquely to a linear isometry $T:X\to Y$. Let $x\in\partial X_1$.
Choose nonzero $z_n\in E$ such that $z_n\to x$, and put
\[
 x_n=\frac{z_n}{\left\lVert z_n\right\rVert }.
\]
Then $x_n\to x$ and $T(x_n)=S(x_n)$. By continuity, $T(x)=S(x)$.

For every nonzero $y\in Y$, there exists $x\in\partial X_1$ such
that
\[
 S(x)=\frac{y}{\left\lVert y\right\rVert }.
\]
Hence $T(\left\lVert y\right\rVert x)=y$, so $T$ is surjective. Finally, every linear
extension of $S$ must satisfy
\[
 T(x)=\left\lVert x\right\rVert \,S\left(\frac{x}{\left\lVert x\right\rVert }\right)
 \qquad(x\neq 0),
\]
which proves uniqueness.
\end{proof}

%%%%%%%%%%%%%%%%%%%%%%%%%%%%%%%%%%%%%%%%%%%%%%%%%%%%%%%%%%
\vspace{1cm}
\textbf{Author contributions.}
All authors contributed to the conception and development of the study, participated in the proofs and preparation of the manuscript, and read and approved the final manuscript.

\textbf{Funding.}
The authors received no funding for this work.

\textbf{Data availability.}
No datasets were generated or analysed during the current study.
%%%%%%%%%%%%%%%%%%%%%%%%%%%%%%%%%%%%%%%%%%%%%%%%%%%%%%%%%%%%%

\section*{Declarations}
\,

\textbf{Conflict of interest.}
The authors declare that they have no competing interests.
%                           %%
%%===========================================================================================%%

\end{document}